\numberwithin{equation}{section}
\newcommand{\be}{\begin{eqnarray}}
\newcommand{\ee}{\end{eqnarray}}
\newcommand{\ce}{\begin{eqnarray*}}
\newcommand{\de}{\end{eqnarray*}}
\newtheorem{theorem}{Theorem}[section]
\newtheorem{lemma}[theorem]{Lemma}
\newtheorem{remark}[theorem]{Remark}
\newtheorem{definition}[theorem]{Definition}
\newtheorem{proposition}[theorem]{Proposition}
\newtheorem{Examples}[theorem]{Example}
\newtheorem{corollary}[theorem]{Corollary}
\newenvironment{proof of theorem 1.2}{{\it Proof of Theorem 1.2}.}{{\hfill 	
$\square$\hskip - \parfillskip}}
\newenvironment{proof of theorem 1.3}{{\it Proof of Theorem 1.3}.}{{\hfill 	
		$\square$\hskip - \parfillskip}}
\newenvironment{proof of theorem 1.5}{{\it Proof of Theorem 1.5}.}{{\hfill 	
		$\square$\hskip - \parfillskip}}
\newenvironment{proof of theorem 1.4}{{\it Proof of Theorem 1.4}.}{{\hfill 	
		$\square$\hskip - \parfillskip}}
\newenvironment{proof of theorem 1.6}{{\it Proof of Theorem 1.6}.}{{\hfill 	
		$\square$\hskip - \parfillskip}}
\newenvironment{proof of theorem 5.3}{{\it Proof of Theorem 5.3}.}{{\hfill 	
		$\square$\hskip - \parfillskip}}
\newenvironment{proof of (1.3)}{{\it Proof of (1.3)}.}{{\hfill 	
		$\square$\hskip - \parfillskip}}
\newenvironment{proof of theorem 1.8}{{\it Proof of Theorem 1.8}.}{{\hfill 	
			$\square$\hskip - \parfillskip}}
\newenvironment{proofs of Theorem 1.1-1.5}{{\it Proofs of Theorem 1.1-1.5}.}{{\hfill 	
		$\square$\hskip - \parfillskip}}
\newcommand{\rmnum}[1]{\romannumeral #1}
\newcommand{\Rmnum}[1]{\expandafter\@slowromancap\romannumeral #1@}
\def\eps{\varepsilon}
\def\om{\omega}
\def\Om{\Omega}
\def\p{\partial}
\def\l{\lambda}
\def\la{\langle}
\def\ra{\rangle}
\def\[{{\Big[}}
\def\]{{\Big]}}
\def\<{{\langle}}
\def\>{{\rangle}}
\def\({{\Big(}}
\def\){{\Big)}}
\def\bx{{\mathbf{x}}}
\def\min{{\mathord{{\rm min}}}}
\def\={&\!\!=\!\!&}
\def\cK{{\mathcal K}}
\def\mR{{\mathbb R}}
\def\mS{{\mathbb S}}
\def\1{{\mathbf{1}}}
\def\sA{{\mathscr A}}
\def\geq{\geqslant}
\def\leq{\leqslant}
\def\ge{\geqslant}
\def\le{\leqslant}
\def\k{\kappa}
\def\eps{\varepsilon}
\def\om{\omega}
\def\Om{\Omega}
\def\p{\partial}
\def\l{\lambda}
\def\la{\langle}
\def\ra{\rangle}
\def\[{{\Big[}}
\def\]{{\Big]}}
\def\<{{\langle}}
\def\>{{\rangle}}
\def\({{\Big(}}
\def\){{\Big)}}
\def\bx{{\mathbf{x}}}
\def\min{{\mathord{{\rm min}}}}
\def\={&\!\!=\!\!&}
\def\bt{\begin{theorem}}
\def\et{\end{theorem}}
\def\bl{\begin{lemma}}
\def\el{\end{lemma}}
\def\br{\begin{remark}}
\def\er{\end{remark}}
\def\bx{\begin{Examples}}
\def\ex{\end{Examples}}
\def\bd{\begin{definition}}
\def\ed{\end{definition}}
\def\bp{\begin{proposition}}
\def\ep{\end{proposition}}
\def\bc{\begin{corollary}}
\def\ec{\end{corollary}}
\def\geq{\geqslant}
\def\leq{\leqslant}
\def\ge{\geqslant}
\def\le{\leqslant}
 \def\nn{\nabla}
\def\<{\langle} \def\>{\rangle}
\def\bpf{\begin{proof}}
\def\epf{\end{proof}}
\begin{document}
	
\title{A class of anisotropic inverse Gauss curvature flows and dual Orlicz Minkowski type problem}\thanks{\it {This research was partially supported by NSFC (No. 11871053).}}
\author{Shanwei Ding*$^1$ and Guanghan Li$^1$}

\thanks{{\it 2020 Mathematics Subject Classification: 53E99, 35K55.}}
\thanks{{\it Keywords: anisotropic inverse Gauss curvature flow, asymptotic behaviors, dual Orlicz Minkowski type problem}}

\thanks{{\it *Corresponding author. E-mail: 2014301000108@whu.edu.cn}}

\thanks{{\it Guanghan Li: ghli@whu.edu.cn}}

\thanks{\it $^1$ School of Mathematics and Statistics, Wuhan University, Wuhan 430072, China.}

\thanks{\it Declarations of interest: none.}

\begin{abstract}
In this paper, we study the long-time existence and asymptotic behavior for a class of anisotropic inverse Gauss curvature flows. By the stationary solutions of anisotropic flows, we obtain some new existence results for the dual Orlicz Minkowski type problem and even dual Orlicz Minkowski type problem for smooth measures, which is the most reasonable extension of the $L^p$  dual Minkowski problem from the dual point of view.  The results of corresponding $L^p$ versions are $L^p$ dual Minkowski problem for $p>q$; and even $L^p$ dual Minkowski problem for $p>-1$, or $q<1$, or some ranges of $p<0<q$, which contain all existence results for smooth measures up to now except $p=q$ or $q=n+1$ ($L^p$ Minkowski problem).

\end{abstract}

\maketitle
\setcounter{tocdepth}{2}
\tableofcontents

\section{Introduction}

Anisotropic inverse curvature flows with strictly convex hypersurfaces and speed depending on their curvatures, support function and radial function have been considered recently, cf.  \cite{IM,DL3,BIS,CL} etc.. These flows usually provided alternative proofs and smooth category approach of the existence of solutions to elliptic PDEs arising in convex body geometry. However, the literature on non-homogeneous anisotropic inverse curvature  flows is not very rich and there are few works in this direction, cf. \cite{JLL,BIS2,BIS3,DL4}.  One advantage of this method is that there is no need to employ the constant rank theorem. For example, if $F=\sigma_{k}^{\frac{1}{k}}(\l)$, by the lower bound of $F^{-1}=\sigma_{k}^{-\frac{1}{k}}(\l)=(\frac{\sigma_{n}}{\sigma_{n-k}})^\frac{1}{k}(\k)$ one has that the hypersurfaces naturally preserve convexity, where $\k$ and $\l=\frac{1}{\k}$ are the principal curvature and principal curvature radii of hypersurfaces respectively. Whether these flows can be extended is an interesting problem. In our previous work \cite{DL4}, we use anisotropic flows without global terms to derive some results for dual Orlicz Christoffel-Minkowski type problem. In that paper, we can set the constant in elliptic PDEs to $1$ if some conditions are satisfied.  In this paper firstly we want to use anisotropic flows with global terms to derive some results for these problems.  However, if $F\ne \sigma_{n}^\frac{1}{n}(\l)$, there is no suitable monotone integral quantities to be used to prove the convergence of flows if speed depends on  $X$. More details can be seen in \cite{DL3}; If $F=\sigma_{n}^\frac{1}{n}(\l)$, inspired by the homogeneous case \cite{CL,DL3}, we consider a large of anisotropic flows with global terms.

Let $M_0$ be a closed, smooth and strictly convex hypersurface in $\mathbb{R}^{n+1}$ ($n\geq2$), and $M_0$ encloses the origin. In this paper, we study the following expanding flow
   \begin{equation}
 	\label{1.1}
 	\begin{cases}
 		&\frac{\partial X}{\partial t}(\cdot,t)=(\eta(t)\varphi(\nu,u)G(X)\sigma_{n}^{\frac{\beta}{n}}(\l_i)-1)u\nu,\\
 		&X(\cdot,0)=X_0,
 	\end{cases}
 \end{equation}
where $\sigma_{n}(\l_i)=\prod_{i=1}^n\l_i$ is the $n$-th elementary symmetric function of the hypersurface $M_t$ parameterized by a smooth embedding $X(\cdot,t): \mS^n\times[0,T^*)\to \mR^{n+1}$, $\l=(\l_1,\cdots,\l_n)$ are the principal curvature radii of the  hypersurface $M_t$, $\beta>0$, $\varphi:\mS^n\times(0,+\infty)\rightarrow(0,+\infty)$ and $G:\mR^{n+1}\backslash\{0\}\rightarrow(0,+\infty)$ are two smooth functions, $u$ is the support function, $\nu(\cdot,t)$ is the outer unit normal vector field to $M_t$ and
\begin{equation}\label{1.2}
\eta(t)=\dfrac{\int_{\mS^n}G(\xi,\rho)^\frac{n}{\beta}\rho^{n+1}d\xi}{\int_{\mS^n} u\varphi(x,u)G^{\frac{n}{\beta}+1}(X)\sigma_{n}^{\frac{\beta}{n}+1}dx}.
\end{equation}
Note that $\rho=\sqrt{\langle X,X\rangle}$, $X=\rho\xi$, thus $G(X)$ can be regarded as $G(\xi,\rho)$. $X$ can be also denoted by $X:=\bar\nn u=ux+Du$,  where $\langle\cdot,\cdot\rangle$ is the standard inner product in $\mR^{n+1}$,  $D$ is the covariant derivative with respect to an orthonormal frame on $\mS^n$, $\bar\nn$ is the covariant derivative with respect to the metric in Euclidean space.

In the definition of $\eta(t)$, we use the following notations. Let $\Om_{t}$ be the convex body enclosed by $M_t$. Firstly we mention that the definitions of the support function $u=u(x):\mS^n\rightarrow\mR$ and radial function $\rho=\rho(\xi):\mS^n\rightarrow\mR$ of  a convex body which contains the origin are equal to the definitions of the support function and radial function of the hypersurface enclosing the convex body respectively. Let $\vec{\rho}(\xi):=\rho(\xi)\xi$. We then introduce two set-valued mappings, the radial Gauss mapping $\mathscr{A}=\sA_\Om$ and the reverse radial Gauss mapping $\sA^*=\sA^*_\Om$, which are given by, for any Borel set $\om\subset\mS^n$,
\begin{align*}
	\sA(\om)=&\{\nu(\vec{\rho}(\xi)):\xi\in\om\},\\
	\sA^*(\om)=&\{\xi\in\mS^n:\nu(\vec{\rho}(\xi))\in\om\}.
\end{align*}
Note that $\sA(\xi)$ (resp. $\sA^*(x)$) is a unique vector for almost all $\xi\in\mS^n$ (resp. for almost all $x\in\mS^n$) \cite{SR}. Thus we shall use the notations for convenience
\begin{equation*}
	dx=d\mu_{\mS^n}(x) \text{ and }d\xi=d\mu_{\mS^n}(\xi).
\end{equation*}
By the dual body, we find an interesting relation about $\varphi$ and $G$. Let $\Om_{t}$ be the convex body whose support function is $u(\cdot,t)$, $\Om_{t}^*$ be its polar body
$$\Om_{t}^*=\{z\in\mR^{n+1}:z\cdot y\le1 \text{ for all } y\in \Om_{t}\},$$
and $u^*(\cdot,t)$ be the support function of $\Om_{t}^*$. If $\rho(\cdot,t)$ is the radial function of $\Om_{t}$, then
\begin{equation}\label{3.10}
	u^*(\xi,t)=\frac{1}{\rho(\xi,t)}.
\end{equation}
It is well-known that $\sA_{\Om_t^*}=\sA_{\Om_t}^*$, see e.g. \cite{HLY2}. Similar to $X=\rho\xi$, we can consider the vector $Y:=\frac{1}{u}\nu$. Since $\frac{1}{u}=\sqrt{\langle Y,Y\rangle}$, $\varphi(\nu,u)$ can be regarded as $\varphi(Y)$. In fact, the position vector of dual body is $\rho^*(\xi^*)\xi^*=\frac{1}{u}\nu=Y$. Thus, if there is a existence result under a condition of $\varphi$, there must be a existence result under the corresponding condition of $G$. This is the key observation to our main results. 
 More details can be seen in the proof of Lemma \ref{l3.3}.

Flow (\ref{1.1}) is inspired by our previous work \cite{DL3}. However, flow (\ref{1.1}) is more complicated than the flow in \cite{DL3}, since it involves two nonlinear functions $\varphi$ and $G$. Note $G$ is a function of $X$ other than $\vert X\vert$, which needs more effort to deal with. 

In this paper, we study the general and even stationary solutions of anisotropic flows. A function $f:\mS^n\rightarrow \mR$ is called even if
$$f(-x)=f(x),\qquad \forall x\in\mS^n.$$
Thus $\varphi(x,u)$ is called even if
$$\varphi(-x,u)=\varphi(x,u),\qquad \forall x\in\mS^n.$$
Note that a support function is even if and only if the convex body determined by this support function is origin symmetric. Let $B_r$ be the ball with radius $r$ in $\mR^{n+1}$. Let
\begin{equation}\label{1.3}
	\begin{split}
		q^*&=\begin{cases}
			\dfrac{q}{q-n} \quad \text{  if } q\ge n+1,\\[2pt]
			\dfrac{nq}{q-1} \quad \text{  if } 1<q<n+1,\\[2pt]
			+\infty \qquad \text{  if } 0<q\le1.
		\end{cases}
	\end{split}
\end{equation}

We first prove the long time existence and convergence of the flow (\ref{1.1}). In the following theorems, we also denote $\varphi(Y)=\varphi(\nu,u)$, where $Y\in\mR^{n+1}\backslash\{0\}$.
\begin{theorem}\label{t1.1}
Let $M_0$ be a closed, smooth, uniformly convex hypersurface in $\mathbb{R}^{n+1}$, $n\ge2$, enclosing the origin. Let $\varphi:\mS^n\times(0,+\infty)\rightarrow(0,+\infty)$ and $G:\mR^{n+1}\backslash\{0\}\rightarrow(0,+\infty)$ be two smooth functions. Suppose the following conditions hold.

(\rmnum{1}) If $\exists\eps,\delta>0$, $\forall \theta_1,\theta_2\in\mS^n$, 
$$\int_{1}^\infty\int_{\{\la x,\theta_1\ra\ge\eps\}\cap\mS^n}\varphi^{-\frac{n}{\beta}}(x,s)dx ds=+\infty, \int_{0}^1\int_{\{\la\xi,\theta_2\ra\ge\delta\}\cap\mS^n}G(r\xi)^\frac{n}{\beta}r^nd\xi dr=+\infty;$$

(\rmnum{2}) either

(a) $\int_{B_1}\varphi^{-\frac{n}{\beta}}(y)|y|^{-n}dy<+\infty$,

or (b) $\int_{\mR^{n+1}\backslash B_1}G(y)^\frac{n}{\beta}dy<+\infty$.

\noindent Then flow (\ref{1.1}) has a unique smooth strictly convex solution $M_t$ for all time $t>0$ and a subsequence of $M_t$ converges in $C^\infty$-topology to a positive, smooth, uniformly convex solution to $\varphi(\nu,u)G(X)\sigma_{n}^{\frac{\beta}{n}}=c$ for some $c>0$.
\end{theorem}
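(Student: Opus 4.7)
The plan is the standard three-stage program for anisotropic inverse-type curvature flows with a global term: first, rewrite (\ref{1.1}) in terms of the support function $u(\cdot,t)$ and identify the integral balance encoded in the choice of $\eta(t)$; second, derive uniform $C^0$, $C^1$, $C^2$ estimates for $u$; third, upgrade to $C^\infty$ via Krylov--Evans, conclude long-time existence, and extract a smooth subsequential limit using a monotone functional whose critical points solve $\varphi G\sigma_n^{\beta/n}=c$. Parameterizing $M_t$ by the inverse Gauss map, the support function evolves by
\begin{equation*}
\partial_t u \;=\; \bigl(\eta(t)\,\varphi(x,u)\,G(X)\,\sigma_n^{\beta/n}(\lambda_i)\,-\,1\bigr)\,u,
\end{equation*}
with $\lambda_i$ the eigenvalues of $W_{ij}=u_{ij}+u\delta_{ij}$. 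Using the change of variable $d\xi = \tfrac{u\,\sigma_n(\lambda)}{\rho^{n+1}}\,dx$, the definition (\ref{1.2}) of $\eta(t)$ precisely enforces the instantaneous identity $\int_{\mS^n}\partial_t u\cdot G^{n/\beta}\sigma_n\,dx=0$, which is the infinitesimal form of the conservation of a dual-Orlicz-volume-type functional $\mathcal V(t)$ built from $\int_{\mS^n}G(\xi,\rho)^{n/\beta}\rho^{n+1}d\xi$; condition (ii) ensures $\mathcal V(0)<+\infty$.

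The $C^0$ estimate is the main obstacle. To rule out $u_{\max}(t_k)\to\infty$, extract a concentration direction $\theta_1\in\mS^n$; by convexity $u(\cdot,t_k)$ is comparable to $u_{\max}(t_k)$ on the spherical cap $\{\langle x,\theta_1\rangle\ge\eps\}$. A H\"older estimate applied to the denominator of (\ref{1.2}) relates $\eta(t)$ to spherical averages of $\varphi^{-n/\beta}$ over this cap; combining this with the uniform boundedness of $\mathcal V$, with the evolution $\tfrac{d}{dt}\log u = \eta\varphi G\sigma_n^{\beta/n}-1$, and with the divergence $\int_1^\infty\int_{\{\langle x,\theta_1\rangle\ge\eps\}}\varphi^{-n/\beta}(x,s)\,dx\,ds=+\infty$ from condition (i) produces the contradiction. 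The symmetric lower bound $u_{\min}(t)\ge c_0>0$ is obtained by passing to the polar dual body $\Omega_t^*$: the key observation of the introduction that $Y=\nu/u$ is the radial vector of $\Omega_t^*$ identifies $\varphi(Y)$ on the primal side with the $G$-type weight on the dual side, so the small-radius divergence in (i) rules out $\rho_{\min}\to 0$, equivalently $u_{\min}\to 0$. The dichotomy (a)/(b) in (ii) supplies the required finite integral on one side directly, the other side being obtained through this duality.

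Once $0<c_0\le u\le C_0$, $C^1$ bounds on $u$ follow from convexity. The upper bound on the largest principal curvature radius is obtained via the standard maximum principle on an auxiliary function of the form $\log\det W - Au + B|\bar\nabla u|^2$; the $X$-dependence of $G$ (rather than pure $|X|$-dependence) generates only additional first-order terms absorbed by the $C^1$ bound. The corresponding lower bound follows from a maximum principle on $\log\sigma_n^{\beta/n}-\gamma u$ together with the two-sided bound on $\eta(t)$ obtained above. Krylov--Evans and parabolic bootstrap then give uniform $C^\infty$ estimates, hence long-time existence. For convergence, a companion functional $\mathcal J(t)$ --- an Orlicz-type integral of $u$ or its dual --- is shown to be monotone along the flow by direct differentiation combined with a H\"older inequality, with $\dot{\mathcal J}=0$ precisely when $\varphi(\nu,u)G(X)\sigma_n^{\beta/n}$ is constant on $M_t$. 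Combined with the uniform $C^\infty$ estimates, Arzel\`a--Ascoli yields a subsequence $M_{t_k}\to M_\infty$ in $C^\infty$, and the vanishing of $\dot{\mathcal J}$ at the limit forces $\varphi(\nu,u)G(X)\sigma_n^{\beta/n}\equiv c>0$ on $M_\infty$.
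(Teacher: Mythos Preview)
Your overall architecture (monotone functional $\mathcal J$, conserved functional $\mathcal V$, $C^2$ via maximum principle, Krylov--Evans, subsequential convergence) matches the paper. The gap is in the $C^0$ estimate, which is the core of the argument.

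First, the mechanism you describe for the upper bound --- ``a H\"older estimate applied to the denominator of (\ref{1.2}) relates $\eta(t)$ to spherical averages of $\varphi^{-n/\beta}$, combined with the evolution $\tfrac{d}{dt}\log u$'' --- is circular: the denominator of $\eta(t)$ contains $\sigma_n^{\beta/n+1}$, so you cannot bound $\eta$ before the $C^2$ estimate, and the $C^2$ estimate in turn needs $C^0$. The paper does \emph{not} go through $\eta$ at all for $C^0$. It uses the monotone functional $J(t)$ (which you only invoke at the convergence stage) already here: in case (ii)(a) one sets $J(t)=\int_{\mS^n}\!\int_0^{u}\varphi^{-n/\beta}(x,s)\,ds\,dx$, finite by (ii)(a), and non-increasing by the same H\"older computation you cite for $\dot{\mathcal J}$. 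Then convexity gives $u(x)\ge M\langle x,x_{\max}\rangle$ on the half-cap, so
\[
J(0)\;\ge\;J(t)\;\ge\;\int_{\{\langle x,x_{\max}\rangle>\eps\}}\!\int_0^{M\eps}\varphi^{-n/\beta}(x,s)\,ds\,dx,
\]
and the first divergence in (i) bounds $M$. No control of $\eta$ is needed.

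Second, your lower bound via the polar dual $\Omega_t^*$ is the paper's device for Theorem~\ref{t1.2}, where $M_t$ is origin-symmetric; it is not available here. Without evenness the dual radial bound $\rho^*\le m^*/|\langle\xi,\theta\rangle|$ (with absolute value) fails, and the Blaschke--selection/Fatou step in the dual argument collapses. For Theorem~\ref{t1.1} the paper instead argues directly on the primal side with the conserved functional $V$: once $u\le C_{10}$ is known, set $V(t)=\int_{\mS^n}\!\int_\rho^{C_{10}}G(r\xi)^{n/\beta}r^n\,dr\,d\xi$; then $\rho(\xi)\le m/\langle\xi,\xi_{\min}\rangle$ on the half-cap (Lemma~\ref{l2.1}, no symmetry needed) gives
\[
V(0)\;=\;V(t)\;\ge\;\int_{m/\delta}^{C_{10}}\!\int_{\{\langle\xi,\xi_{\min}\rangle>\delta\}}G(r\xi)^{n/\beta}r^n\,d\xi\,dr,
\]
and the second divergence in (i) bounds $m$ from below. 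In case (ii)(b) the order is reversed: $V(t)=\int_{\mR^{n+1}\setminus\Omega_t}G^{n/\beta}$ is finite, giving the lower bound first, after which $J(t)=\int_{\mS^n}\!\int_{1/C_{10}}^{u}\varphi^{-n/\beta}$ gives the upper bound. Your sentence ``condition (ii) ensures $\mathcal V(0)<+\infty$'' conflates the two cases: (ii)(a) makes $J$ finite, (ii)(b) makes $V$ finite, and this dichotomy dictates which bound is obtained first.
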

\textbf{Remark:} (1) The conditions (\rmnum{1}) and (\rmnum{2}b) is mentioned for the first time in \cite{GHW1} if $\varphi(x,u)=f(x)g(u)$. Compared with \cite{GHW1}, we need more effort to deal with general $\varphi(x,u)$. Condition (\rmnum{2}a) is new.

(2) Since $\int_{B_1}\varphi^{-\frac{n}{\beta}}(y)|y|^{-n}dy=\int_{0}^1\int_{\mS^n}\varphi^{-\frac{n}{\beta}}(x,s)dxds$ and $\int_{\mR^{n+1}\backslash B_1}G(y)^\frac{n}{\beta}dy=\int_{1}^\infty\int_{\mS^n}G(r\xi)^\frac{n}{\beta}r^nd\xi dr$,  conditions (\rmnum{1}) and (\rmnum{2}) are similar in form. Conditions (\rmnum{2}a) and (\rmnum{2}b) seem not similar because of the form in equation (\ref{1.7}). If we consider the homogeneous case, i.e. $\varphi(x,s)^\frac{n}{\beta}=f(x)s^{1-p}$ and $G(X)^\frac{n}{\beta}=|X|^{q-n-1}$, we can find that the integrands in conditions (\rmnum{2}a) and (\rmnum{2}b) should differ by $|y|^{-n}$ by comparison.

(3) When $\varphi(x,s)^\frac{n}{\beta}=f(x)s^{1-p}$ and $G(X)^\frac{n}{\beta}=|X|^{q-n-1}$, assumptions (\rmnum{1}), (\rmnum{2}a) and (\rmnum{1}), (\rmnum{2}b) are satisfied for $p>0\ge q$ and $p\ge0> q$ respectively.

\begin{theorem}\label{t1.2}
Let $M_0$ be a closed, smooth, origin-symmetric, uniformly convex hypersurface in $\mathbb{R}^{n+1}$, $n\ge2$, enclosing the origin. Suppose $\varphi:\mS^n\times(0,+\infty)\rightarrow(0,+\infty)$ and $G:\mR^{n+1}\backslash\{0\}\rightarrow(0,+\infty)$ are two smooth and even functions.

Suppose either


(\rmnum{1}) $\int_{\mR^{n+1}\backslash B_1}\varphi^{-\frac{n}{\beta}}(y)|y|^{-n}dy=+\infty$, and for some $\hat{C}_1>0$, we have
$$\int_{\mS^n}\int_{|\la x,\theta\ra|}^1\varphi^{-\frac{n}{\beta}}(x,s)dsdx\le\hat{C}_1, \quad \forall\theta\in\mS^n;$$

or (\rmnum{2}) $\int_{\mR^{n+1}\backslash B_1}\varphi^{-\frac{n}{\beta}}(y)|y|^{-n}dy<+\infty$, $\int_{ B_1}\varphi^{-\frac{n}{\beta}}(y)|y|^{-n}dy=+\infty$, and for some $\hat{C}_2>0$, we have
$$\int_{\mS^n}\int_{|\la x,\theta\ra|}^\infty\varphi^{-\frac{n}{\beta}}(x,s)dsdx\le\hat{C}_2, \quad \forall\theta\in\mS^n;$$


or (\rmnum{3}) $\int_{B_1}G(y)^\frac{n}{\beta}dy=+\infty$, and for some $\hat{C}_3>0$, we have
$$\int_{\mS^n}\int^\frac{1}{|\la\xi,\theta\ra|}_1G(r\xi)^\frac{n}{\beta}r^ndrd\xi\le\hat{C}_3, \quad \forall\theta\in\mS^n;$$

or (\rmnum{4}) $\int_{B_1}G(y)^\frac{n}{\beta}dy<+\infty$, $\int_{\mR^{n+1}\backslash B_1}G(y)^\frac{n}{\beta}dy=\infty$, and for some $\hat{C}_4>0$, we have
$$\int_{\mS^n}\int^\frac{1}{|\la\xi,\theta\ra|}_0G(r\xi)^\frac{n}{\beta}r^ndrd\xi \le\hat{C}_4, \quad \forall\theta\in\mS^n;$$

or (\rmnum{5}) $\int_{\mR^{n+1}\backslash B_1}\varphi^{-\frac{n}{\beta}}(y)|y|^{-n}dy<+\infty$, and for some $-q^*<p<0<q$, $\hat{C}_5>0$ and $\forall\rho(\xi)>0\in C^\infty(\mS^n)$, we have
$${\lim\sup}_{s\rightarrow0^+}\frac{\int_{s}^\infty\varphi^{-\frac{n}{\beta}}(x,t)dt}{s^p}<\infty,\qquad \int_{\mS^n}\int^{\rho(\xi)}_0G(r\xi)^\frac{n}{\beta}r^ndrd\xi\le \hat{C}_5\int_{\mS^n}\rho^q(\xi)d\xi;$$

or (\rmnum{6}) $\int_{B_1}G(y)^\frac{n}{\beta}dy<+\infty$, and for some $-q^*<p<0<q$, $\hat{C}_6>0$ and $\forall u(x)>0\in C^\infty(\mS^n)$, we have
$${\lim\sup}_{s\rightarrow0^+}\frac{\int_{0}^\frac{1}{s}G(r\xi)^\frac{n}{\beta}r^ndr}{s^p}<\infty,\qquad \int_{\mS^n}\int_\frac{1}{u(x)}^\infty\varphi^{-\frac{n}{\beta}}(x,s)dsdx\le \hat{C}_6\int_{\mS^n}u^q(x)dx.$$
In the case (\rmnum{2}), we additionally choose an origin-symmetric initial hypersurface $M_0$ such that
$$\int_{\mS^n}\int_{u_{M_0}}^\infty\varphi^{-\frac{n}{\beta}}(x,s)dsdx>\hat{C}_2.$$
In the case (\rmnum{4}), we additionally choose an origin-symmetric initial hypersurface $M_0$ such that
$$\int_{\mS^n}\int^{\rho_{M_0}}_0G(r\xi)^\frac{n}{\beta}r^ndrd\xi >\hat{C}_4.$$
Then flow (\ref{1.1}) has a unique smooth strictly convex origin-symmetric solution $M_t$ for all time $t>0$ and a subsequence of $M_t$ converges in $C^\infty$-topology to a positive, smooth, uniformly convex origin-symmetric solution to $\varphi(\nu,u)G(X)\sigma_{n}^{\frac{\beta}{n}}=c$ for some $c>0$.
\end{theorem}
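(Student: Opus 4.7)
The plan is to follow the standard strategy for such globally-constrained anisotropic flows: preserve origin-symmetry, establish uniform $C^0$ bounds via a monotone functional, derive curvature estimates, and conclude $C^\infty$ convergence along a subsequence via Arzelà–Ascoli. I would first verify that origin-symmetry is preserved along \eqref{1.1}: since $\varphi$ and $G$ are even, the right-hand side of the flow is invariant under $X\mapsto -X$, so uniqueness of smooth solutions forces $M_t=-M_t$ for all $t$. This is crucial because for an origin-symmetric convex body one has the pointwise inequalities $u(x)\ge u_{\max}|\langle x,x_0\rangle|$ with $x_0$ a maximizer of $u$, and dually $\rho(\xi)\ge\rho_{\max}|\langle \xi,\xi_0\rangle|$; these are the key device for converting the integrability hypotheses in (i)--(vi) into $C^0$ control of $u$ and $\rho$.

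Next, and this is the heart of the argument, I would identify the monotone functional behind $\eta(t)$. The normalization \eqref{1.2} is engineered so that either
$$\mathcal{J}_1(t)=\int_{\mS^n}\int_0^{\rho(\xi,t)}G(r\xi)^{n/\beta}r^n\,drd\xi\qquad\text{or}\qquad \mathcal{J}_2(t)=\int_{\mS^n}\int_{u(x,t)}^\infty\varphi^{-n/\beta}(x,s)\,dsdx$$
is monotone along the flow. Using $\partial_t u=(\eta\varphi G\sigma_n^{\beta/n}-1)u$ and the change of variables between support- and radial-parametrizations via the radial Gauss map $\sA$, the time derivative of $\mathcal{J}_i$ reduces, after a Hölder/AM–GM inequality, to a non-negative quantity, exactly as in the non-homogeneous flows of \cite{DL3,DL4}. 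The definition of $\eta$ ensures that a second ``dual'' quermassintegral is preserved, so both $\mathcal{J}_1$ and $\mathcal{J}_2$ enjoy two-sided information.

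I would then derive the $C^0$ bounds case-by-case. In (i) and (iii) the divergence hypothesis (at $\infty$, respectively at $0$), combined with $u\ge u_{\max}|\langle x,x_0\rangle|$ (resp.\ its dual), forces $u_{\max}$ or $\rho_{\max}$ to remain bounded: otherwise $\mathcal{J}_2$ (resp.\ $\mathcal{J}_1$) would blow up, contradicting its monotone-and-bounded behavior coming from the finiteness assumption. Cases (ii) and (iv) are structurally identical once one adds the extra hypothesis on $M_0$, which keeps $\mathcal{J}_2(0)$ (resp.\ $\mathcal{J}_1(0)$) strictly above the universal constant $\hat C_2$ (resp.\ $\hat C_4$), so that monotonicity prevents the flow from migrating into the degenerate region. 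Cases (v) and (vi) require a finer two-sided estimate: the $\limsup$-polynomial control gives upper comparability with $u_{\max}^{-p}$ or $\rho_{\max}^{-p}$, while the integral domination by $\int\rho^q$ or $\int u^q$ gives lower comparability, and the range $-q^*<p<0<q$ is exactly what is needed to interpolate these bounds and conclude $u_{\min}\gtrsim 1\gtrsim u_{\max}$ uniformly. Cases (iii), (iv), (vi) are, moreover, essentially dual to (i), (ii), (v) via the polar-body identity $u^*=1/\rho$ and $\sA_{\Om^*}=\sA^*_\Om$ emphasized in the introduction.

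With $u$ bounded above and below uniformly in $t$, the $C^2$ estimates--upper bound for $\sigma_n(\l_i)$ and lower bound for the principal curvature radii--follow from a standard maximum-principle argument on the spherical Hessian $D^2u+uI$ applied to the evolution equation, exactly as in \cite{DL3}; Krylov and Evans–Krylov yield $C^{2,\a}$, Schauder bootstrapping gives uniform $C^\infty$ estimates, and Arzelà–Ascoli produces a subsequence $M_{t_k}\to M_\infty$ in $C^\infty$. Since $\mathcal{J}_i$ is monotone and bounded, $\partial_t\mathcal{J}_i\to 0$ along $t_k$, which forces $\eta\varphi G\sigma_n^{\beta/n}\equiv 1$ on $M_\infty$ and identifies $c=\lim\eta(t_k)$. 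The main obstacle is the $C^0$ step, and within it the most delicate part is cases (v)--(vi), where both $u_{\max}$ and $u_{\min}$ must be controlled simultaneously by playing the polynomial-growth hypothesis against the $q$-homogeneous integral bound under the restriction $-q^*<p<0<q$.
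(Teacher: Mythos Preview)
Your architecture matches the paper's: preserve origin-symmetry, exploit a conserved quantity built from $G$ and a monotone quantity built from $\varphi$, obtain $C^0$ bounds case-by-case with (i), (ii), (vi) read off from (iii), (iv), (v) via polar duality, then cite the $C^2$ estimates of \cite{DL4} and bootstrap. Two points, however, are genuine gaps rather than omitted routine steps.

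First, your $\mathcal J_1(t)=\int_{\Omega_t}G^{n/\beta}$ is ill-defined in cases (i)--(iii), where $\int_{B_1}G^{n/\beta}=+\infty$ (or the dual statement); the paper uses the renormalized $V(t)=\int_{B_R\setminus\Omega_t}G^{n/\beta}-\int_{\Omega_t\setminus B_R}G^{n/\beta}$, which is still conserved. More substantively, your case (iii) has the logic reversed: the conserved $V$ together with the constraint $\hat C_3$ gives the \emph{lower} bound on $\rho$ first (if $\rho_{\min}\to0$ along a subsequence, Blaschke selection degenerates $\Omega_t\cap B_1$ to a hyperplane and Fatou forces $\int_{B_1\setminus\Omega_t}G^{n/\beta}\to+\infty$), and only then does the monotone $J$ furnish the upper bound using that lower bound. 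The order matters because case (iii) imposes no hypothesis on $\varphi$ at all.

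Second, and this is the main omission, cases (v)--(vi) hinge on the generalized Blaschke--Santal\'o inequality (Lemma~\ref{l2.2}), which you do not invoke. From the $\limsup$ hypothesis one gets $\int_u^\infty\varphi^{-n/\beta}\,ds\le Cu^p=C(\rho^*)^{-p}$ on the set where $u$ is small, so controlling $J(t)$ from above reduces to bounding $\int_{\mS^n}(\rho^*)^{-p}\,dx$. The only available information on the other side is the lower bound $\int_{\mS^n}\rho^q\,d\xi\ge V(0)/\hat C_5$ coming from the conserved $V$ and the integral hypothesis. Passing from a lower bound on $\int\rho^q$ to an upper bound on $\int(\rho^*)^{-p}$ is exactly the content of Lemma~\ref{l2.2}, and the condition $-p\le q^*$ is precisely its range of validity. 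Your phrase ``the range $-q^*<p<0<q$ is exactly what is needed to interpolate these bounds'' is correct but not an argument; without naming this inequality the step does not go through. A smaller point: before the $C^2$ estimates you also need two-sided control of $\eta(t)$; the paper does not bound $\eta$ below directly but instead shows $\eta(t)\max\sigma_n^{\beta/n}\ge C$, which is what the cited lemma from \cite{DL4} actually consumes.
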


\textbf{Remark:} (1) When $\varphi(x,s)^\frac{n}{\beta}=f(x)s^{1-p}$ and $G(X)^\frac{n}{\beta}=|X|^{q-n-1}$, assumptions (\rmnum{1})-(\rmnum{4}) are satisfied, in order, for $p\ge0$, $-1<p<0$, $q\le0$, $0<q<1$ respectively; both assumptions (\rmnum{5}) and (\rmnum{6}) are satisfied for $-q^*<p<0<q$.   To see this, note that
\begin{align*}
&\int_{\mS^n}\log|x_i|d\mu>-\infty,\\
&\int_{\mS^n}\frac{1}{|x_i|^l}d\mu<\infty \quad \text{ for } 0<l<1,
\end{align*}
where $(x_1,\cdots,x_{n+1})$ denotes the Euclidean coordinates. In summary, we derive the even results with $p>-1$, or $q<1$ or $-q^*<p<0<q$. Hence, this theorem generalizes many well-known results about flows to the Orlicz setting, such as \cite{CL,DL3,BIS}. Meanwhile, this theorem covers Theorem 2.1 in \cite{BIS3}. Comparing with \cite{BIS3}, the speed of the flow in this paper has a more complex form since $G=1$ and $\varphi(x,u)=f(x)g(u)$ in \cite{BIS3}. We derive this results with more complex form without additional conditions.

(2) The long time existence and convergence result of flows corrsponding to
\begin{equation*}
\int_{0}^1\varphi^{-\frac{n}{\beta}}(x,s)ds<+\infty \text{ and } \int_{1}^\infty\varphi^{-\frac{n}{\beta}}(x,s)ds=+\infty,\quad \forall x\in\mS^n
\end{equation*}
can be deduced from \cite{BIS3}, i.e. $G=1$ and $\varphi(x,u)=f(x)g(u)$ in that paper. In fact, we can derive the long time existence and convergence result under a weaker condition
\begin{equation}\label{x1.5}
\int_{0}^1\int_{\mS^n}\varphi^{-\frac{n}{\beta}}(x,s)dxds<+\infty \text{ and } \int_{1}^\infty\int_{\mS^n}\varphi^{-\frac{n}{\beta}}(x,s)dxds=+\infty.
\end{equation}
For general $\varphi(x,u)$, this situation is harder to deal with. In this theorem, we derive a weaker condition (\rmnum{1}), i.e. the condition (\rmnum{1}) is weaker than (\ref{x1.5}). Thus we need to do lots of effort to deal with this situation. By the dual relation, we can derive a long time existence and convergence  result if
\begin{equation}\label{x1.6}
\int_{B_1}G(y)^\frac{n}{\beta}dy=+\infty \text{ and } \int_{\mR^{n+1}\backslash B_1}G(y)^\frac{n}{\beta}dy<+\infty.
\end{equation}
This result is covered by case (\rmnum{3}).
In condition (\ref{x1.5}), we have
$$\int_{\mS^n}\int_{0}^{|\la x,\theta\ra|}\varphi^{-\frac{n}{\beta}}(x,s)dsdx<\int_{0}^1\int_{\mS^n}\varphi^{-\frac{n}{\beta}}(x,s)dxds<\infty, \quad \forall\theta\in\mS^n.$$
Therefore, the integral conditions in (\rmnum{1}) and (\rmnum{2}) compared to the condition (\ref{x1.5}) are not more restrictive. Similarly, the integral conditions in (\rmnum{3}) and (\rmnum{4}) are also not more restrictive. We mention that the cases (\rmnum{1}), (\rmnum{2}) and (\rmnum{6}) are derived by  (\rmnum{3}), (\rmnum{4}) and (\rmnum{5}) respectively and the dual relation. Note that $\int_{\mS^n}\int^{\rho(\xi)}_0G(r\xi)^\frac{n}{\beta}r^ndrd\xi\le \hat{C}_5\int_{\mS^n}\rho^q(\xi)d\xi$ in case (\rmnum{5}) is weaker than $\int^{\rho}_0G(r\xi)^\frac{n}{\beta}r^ndr\le \hat{C}_5\rho^q$.

(3) There must have a lot of hypersurfaces satisfying the cases (\rmnum{2}) and (\rmnum{4}). In fact, spheres with radius small enough or large enough satisfy the cases (\rmnum{2}) and (\rmnum{4}) by $\int_{0}^\infty\int_{\mS^n}\varphi^{-\frac{n}{\beta}}(x,s)dsdx=+\infty$ and $\int_{\mR^{n+1}\backslash B_1}G(y)^\frac{n}{\beta}dy=\infty$ respectively. These conditions are only used to ensure the existence of initial hypersurfaces that satisfy these cases.

Under flow (\ref{1.1}), if we parameterize the hypersurface $M_t$ by inverse Gauss map $X(x,t): \mS^n\times[0,T^*)\to \mR^{n+1}$, by \cite{DL} Section 2 the support function $u$ satisfies
\begin{equation}\label{1.4}
	\begin{cases}
		&\frac{\partial u}{\partial t}=(\eta(t)\varphi(\nu,u)G(X)\sigma_{n}^{\frac{\beta}{n}}(\l_i)-1)u,\\
		&u(\cdot,0)=u_0.
	\end{cases}
\end{equation}

Next, we slightly modify flow (\ref{1.1}) to treat a class of regular dual Orlicz Minkowski type problems without the assumption that $\varphi$ and $G$ are even and we can set $c=1$ in this situation. Inspired by \cite{DL4}, we consider the flow
   \begin{equation}
	\label{x1.7}
	\begin{cases}
		&\frac{\partial X}{\partial t}(\cdot,t)=(\varphi(\nu,u)G(X)\sigma_{n}^{\frac{\beta}{n}}(\l_i)-1)u\nu,\\
		&X(\cdot,0)=X_0,
	\end{cases}
\end{equation}
i.e. $\eta=1$ in flow (\ref{1.1}). Therefore we don't need to distinguish between flow (\ref{1.1}) and (\ref{x1.7}) usually.
\begin{theorem}\label{xt1.3}
Let $M_0$ be a closed, smooth, uniformly convex hypersurface in $\mathbb{R}^{n+1}$, $n\ge2$, enclosing the origin. Let $\varphi:\mS^n\times(0,+\infty)\rightarrow(0,+\infty)$ and $G:\mR^{n+1}\backslash\{0\}\rightarrow(0,+\infty)$ be two smooth functions. Suppose there are two positive constant $r_1<r_2$ such that
\begin{equation}\label{1.8}
\begin{cases}
	&\varphi(x,r_1)G(r_1x)r_1^\beta\ge1,\quad \text{ for}\quad\forall x\in \mS^n,\\
	&\varphi(x,r_2)G(r_2x)r_2^\beta\le1,\quad \text{ for}\quad\forall x\in \mS^n.
\end{cases}
\end{equation}
If $r_1<\rho_{M_0}<r_2$, flow (\ref{x1.7}) has a unique smooth strictly convex solution $M_t$ for all time $t>0$ and a subsequence of $M_t$ converge in $C^\infty$-topology to a positive, smooth, uniformly convex solution to $\varphi(\nu,u)G(X)\sigma_{n}^{\frac{\beta}{n}}=1$.
\end{theorem}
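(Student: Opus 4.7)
The plan is to pass to the support function formulation of flow (\ref{x1.7}), establish uniform $C^k$ a priori estimates so the solution extends for all time, and then extract a convergent subsequence whose limit is a stationary solution of $\varphi(\nu,u)G(X)\sigma_n^{\beta/n}=1$. By (\ref{1.4}) with $\eta\equiv 1$, the flow is equivalent to
\begin{equation*}
\partial_t u=(F-1)u,\qquad F:=\varphi(\nu,u)\,G(X)\,\sigma_n^{\beta/n}(\l_i),
\end{equation*}
on $\mS^n\times[0,T^*)$; short-time existence is standard since $u_0$ is smooth and uniformly convex.

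The first step is the $C^0$ bound $r_1\le u(\cdot,t)\le r_2$, which is where the barrier condition (\ref{1.8}) is used. At a spatial maximum $x_0$ of $u(\cdot,t)$ one has $Du(x_0,t)=0$ and $D^2u(x_0,t)\le 0$, so $X=u_{\max}x_0$ and $\sigma_n^{\beta/n}=\det(D^2u+uI)^{\beta/n}\le u_{\max}^\beta$; hence $F(x_0,t)\le \varphi(x_0,u_{\max})G(u_{\max}x_0)u_{\max}^\beta$, which is $\le 1$ as soon as $u_{\max}=r_2$ by (\ref{1.8}). The parabolic maximum principle therefore preserves $u_{\max}<r_2$; the symmetric argument at a spatial minimum, using $D^2u\ge 0$ and hence $\sigma_n^{\beta/n}\ge u_{\min}^\beta$, gives $u_{\min}>r_1$. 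The $C^1$ bound $|Du|\le r_2$ then follows for free from $|X|^2=u^2+|Du|^2$ and $|X|=\rho\le r_2$.

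The main step is the uniform two-sided bound on the principal radii $\l_i$. I would first bound $F$ itself by applying the parabolic maximum principle to $Q:=\log F$; its evolution along the flow has the schematic form $\partial_t Q=\mathcal L Q+\mathcal R$, where $\mathcal L$ is the linearized operator built from $\sigma_n^{\beta/n}$ and the cofactor matrix of $h_{ij}=u_{ij}+u\delta_{ij}$, and $\mathcal R$ collects the gradients of $\log\varphi(\cdot,u)$ and $\log G(X)$ together with polynomial factors of $u$ and $X$, all of which are controlled by the $C^0$–$C^1$ bounds. At an extremum of $Q$ one has $\mathcal L Q=0$ and $|\mathcal R|\le C$, yielding uniform-in-time bounds on $Q$ and hence $C^{-1}\le \sigma_n(\l_i)\le C$. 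To rule out $\l_{\max}\to\infty$ I would apply the maximum principle to an auxiliary function of the form
\begin{equation*}
W=\log\l_{\max}+A u-B|X|^2
\end{equation*}
for suitable $A,B>0$, using the good negative contribution from the concavity of $\log\det$ to absorb the mixed gradient terms produced by $\varphi$ and $G$; together with the lower bound on $\sigma_n$ this yields $\l_{\min}\ge C^{-1}$ as well. I expect this $C^2$ step, and in particular the $\l_{\max}$ estimate, to be the main obstacle: $\varphi$ depends jointly on $(\nu,u)$ while $G$ depends on the ambient position $X$, so the remainder terms in $\partial_t W$ couple several gradient quantities that must be simultaneously absorbed in one inequality.

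With uniform $\l_i$-bounds in place the flow becomes uniformly parabolic; since $\log F=\log(\varphi G)+(\beta/n)\log\det(D^2u+uI)$ is concave in $D^2u$, Evans-Krylov and Krylov-Safonov give $C^{2,\alpha}$ estimates, which Schauder bootstraps to uniform $C^{k,\alpha}$ estimates for every $k$, yielding global existence. For the final convergence, evaluating $F=1+u^{-1}\partial_t u$ at a spatial maximum (resp.\ minimum) of $u(\cdot,t)$ exactly as in step one shows $\min_{\mS^n}F(\cdot,t)\le 1\le \max_{\mS^n}F(\cdot,t)$ for every $t$. By Arzel\`a-Ascoli, any $t_j\to\infty$ admits a subsequence along which $M_{t_j}\to M_\infty$ in $C^\infty$. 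A standard oscillation-decay argument, tracking the evolution of $\max Q$ and $\min Q$ along the flow as in \cite{DL3,DL4}, shows $\osc_{\mS^n}F(\cdot,t)\to 0$, so $F$ is constant on $M_\infty$; combined with the pinching $\min F\le 1\le \max F$ this constant must equal $1$, which is the desired stationary solution.
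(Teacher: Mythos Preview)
Your $C^0$ barrier argument is exactly Lemma~\ref{l3.8} of the paper, and your outline of the $C^2$ estimate (bounding $F$ first, then the largest principal radius via an auxiliary function) is the standard route; the paper simply cites \cite{DL4} for this step, so there is no real divergence here.

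The gap is in your convergence argument. You claim that ``evaluating $F=1+u^{-1}\partial_t u$ at a spatial maximum of $u(\cdot,t)$ \dots shows $\min_{\mS^n}F(\cdot,t)\le 1\le \max_{\mS^n}F(\cdot,t)$ for every $t$.'' This is not what the step-one barrier computation gives. At a spatial maximum $x_0$ one obtains only
\[
F(x_0,t)\le \varphi(x_0,u_{\max})\,G(u_{\max}x_0)\,u_{\max}^{\beta},
\]
and the right-hand side is \emph{not} known to be $\le 1$ unless $u_{\max}=r_2$; the barrier hypothesis (\ref{1.8}) says nothing about $\varphi(x,r)G(rx)r^{\beta}$ for $r_1<r<r_2$. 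Likewise, at a spatial maximum of $u$ there is no sign information on $\partial_t u$ itself, so rewriting $F=1+u^{-1}\partial_t u$ does not help. Hence even if your oscillation-decay step gives $F_\infty\equiv c$ on a subsequential limit, your argument does not identify $c=1$.

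The fix is not hard, but it is different from what you wrote. Once you have uniform $C^\infty$ bounds and long-time existence, the $\omega$-limit set is nonempty and invariant under the flow. If a limit hypersurface satisfied $F_\infty\equiv c>1$, then running the flow from that limit gives $\partial_t u=(c-1)u>0$ initially, forcing $u$ to grow and eventually violate the uniform $C^0$ bound $u\le r_2$; the case $c<1$ is symmetric. Equivalently, integrate $\partial_t\log u_{\max}=F(x_0(t),t)-1$ in $t$ and use that $\log u_{\max}$ stays bounded. Either way, the constant is forced to be $1$ by the $C^0$ estimate, not by a pointwise pinching of $F$ at each time. The paper itself does not spell this out either; it defers the convergence for flow~(\ref{x1.7}) entirely to the proof of Theorem~1.4 in \cite{DL4}.
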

\textbf{Remark:} (1) The condition in this theorem is weaker than the condition in Theorem 1.4 in \cite{DL4}. However, in \cite{DL4}, arbitrarily initial, strictly convex hypersurface has a unique smooth strictly convex solution $M_t$ for all time and a subsequence converge in $C^\infty$-topology to a positive, smooth, uniformly convex solution to $\varphi(\nu,u)G(X)\sigma_{n}^{\frac{\beta}{n}}=1$ along flow (\ref{x1.7}). In this theorem we give a restriction on initial hypersurface. Compared with Theorem \ref{t1.1}, we can set $c=1$ in this theorem.

(2) When $\varphi(x,s)^\frac{n}{\beta}=f(x)s^{1-p}$ and $G(X)^\frac{n}{\beta}=|X|^{q-n-1}$, the condition (\ref{1.8}) is satisfied for $p>q$.

The asymptotic behavior of the flow means existence of solutions of the dual Orlicz Minkowski type problem mentioned below. In order to prove the above theorems, we shall establish the a priori estimates for the parabolic equation (\ref{1.4}).

Convex geometry plays important role in the development of fully nonlinear partial differential equations. The classical Minkowski problem, the Christoffel-Minkowski problem, the $L^p$ Minkowski problem, the $L^p$ Christoffel-Minkowski problem in general, are beautiful examples of such interactions (e.g., \cite{FWJ2,GM,LE,LO}). Very recently, the $L^p$ dual curvature measures \cite{LYZ2,HLY2} were developed to the Orlicz case \cite{GHW1,GHW2}, which
unified more curvature measures and were named dual Orlicz curvature measures.
These curvature measures are of central importance in the dual Orlicz–Brunn
Minkowski theory, and the corresponding Minkowski problems are called the dual
Orlicz–Minkowski problems. This problem can be reduced to derive the solution of 
\begin{equation}\label{1.5}
	c\varphi(u)G(\bar\nn u)\det(D^2u+u\Rmnum{1})=f(x), \quad \forall x\in\mS^n.
\end{equation}
There have a few results, e.g. \cite{GHW1,GHW2,LL,CLL,DL4}.

We shall consider the dual Orlicz Minkowski type problem, which is the most reasonable extension of the $L^p$  dual Minkowski problem from the dual point of view. As before, this problem can be reduced to the following nonlinear PDE:
\begin{equation}\label{1.6}
\varphi(x,u)G(\bar\nn u)\det(D^2u+u\Rmnum{1})=c\quad\text{ on } \mS^n.
\end{equation}
(\ref{1.6}) can be converted into $L^p$ Minkowski problem if $\varphi(x,u)G(\bar\nn u)=\psi(x)u^{1-p}$, or $L^p$ dual Minkowski problem if  $\varphi(x,u)G(\bar\nn u)=\psi(x)u^{1-p}\rho^{q-n-1}$, or $L^p$ Aleksandrov problem if   $\varphi(x,u)G(\bar\nn u)=\psi(x)u^{1-p}\rho^{-n-1}$, or 
dual Orlicz Minkowski problem  if   $\varphi(x,u)G(\bar\nn u)=\psi(x)\varphi(u)\phi(X)$ respectively. Some previous known results of these problems can be seen in \cite{BIS,LYZ2,CL,HZ,CCL,CHZ,BF,DL3,BLY,Z} etc..

By Theorem \ref{t1.1}, \ref{t1.2} and \ref{xt1.3}, we have the following existence results of the dual Orlicz Minkowski type problem and even dual Orlicz Minkowski type problem by the asymptotic behavior of the anisotropic curvature flow.
\begin{theorem}\label{t1.3}
Let $\varphi:\mS^n\times(0,+\infty)\rightarrow(0,+\infty)$ and $G:\mR^{n+1}\backslash\{0\}\rightarrow(0,+\infty)$ be two smooth functions.

(1) If $\exists\eps,\delta>0$, $\forall \theta_1,\theta_2\in\mS^n$, 
$$\int_{1}^\infty\int_{\{\la x,\theta_1\ra\ge\eps\}\cap\mS^n}\varphi^{-1}(x,s)dx ds=+\infty, \int_{0}^1\int_{\{\la\xi,\theta_2\ra\ge\delta\}\cap\mS^n}G(r\xi)r^nd\xi dr=+\infty;$$

Suppose either

(a) $\int_{B_1}\varphi^{-1}(y)|y|^{-n}dy<+\infty$,

or (b) $\int_{\mR^{n+1}\backslash B_1}G(y)dy<+\infty$.

\noindent Then there exists a  positive, smooth, uniformly convex solution to  (\ref{1.6}) for some positive constant $c$.

(2) If there are two positive constant $r_1<r_2$ such that
\begin{equation*}
	\begin{cases}
		&\varphi(x,r_1)G(r_1x)r_1^n\ge1,\quad \text{ for}\quad\forall x\in \mS^n,\\
		&\varphi(x,r_2)G(r_2x)r_2^n\le1,\quad \text{ for}\quad\forall x\in \mS^n.
	\end{cases}
\end{equation*}
Then there exists a  positive, smooth, uniformly convex solution to  (\ref{1.6}) with $c=1$.
\end{theorem}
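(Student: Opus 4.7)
The plan is direct: specialize the anisotropic flow (\ref{1.1}) (or (\ref{x1.7}) in case (2)) to $\beta = n$ and then invoke the convergence results already established in Theorems \ref{t1.1} and \ref{xt1.3}. With $\beta = n$ the exponent $n/\beta$ appearing in every integrability hypothesis equals $1$, so that the assumptions on $\varphi$ and $G$ stated in part (1) of the theorem are literally those of Theorem \ref{t1.1}, and condition (\ref{1.8}) with exponent $r_i^n$ in part (2) is literally the pinching hypothesis of Theorem \ref{xt1.3}. Moreover, the stationary equation $\varphi(\nu,u)G(X)\sigma_n^{\beta/n} = c$ supplied by those theorems reduces to $\varphi(\nu,u)G(X)\sigma_n(\lambda_i) = c$, and via the standard support-function identities $\sigma_n(\lambda_i) = \det(D^2 u + u\Rmnum{1})$ and $X = \bar\nn u = ux + Du$ this is precisely the PDE (\ref{1.6}).

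For part (1), I would pick any smooth, closed, uniformly convex initial hypersurface $M_0$ enclosing the origin (e.g.\ a small round sphere about the origin), run flow (\ref{1.1}) with $\beta = n$, and apply Theorem \ref{t1.1}. A subsequence of $M_t$ converges in $C^\infty$-topology to a positive, smooth, uniformly convex hypersurface whose support function satisfies $\varphi(\nu,u)G(X)\sigma_n(\lambda_i) = c$ for some $c > 0$, which is the desired solution of (\ref{1.6}).

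For part (2), I would choose a smooth uniformly convex initial hypersurface with $r_1 < \rho_{M_0} < r_2$ — for instance the sphere of radius $\tfrac{r_1 + r_2}{2}$ — run flow (\ref{x1.7}) with $\beta = n$, and apply Theorem \ref{xt1.3}. The resulting subsequential $C^\infty$-limit satisfies $\varphi(\nu,u)G(X)\sigma_n(\lambda_i) = 1$, which is (\ref{1.6}) with $c = 1$.

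There is no substantive obstacle at the level of this corollary. The genuine analytical difficulties — preservation of convexity, uniform $C^0$, $C^1$ and $C^2$ bounds for (\ref{1.4}), the higher regularity estimates, long-time existence, and subsequential convergence — are all packaged inside Theorems \ref{t1.1} and \ref{xt1.3}. The only care needed is the bookkeeping check that, after the substitution $\beta = n$, each hypothesis listed in the statement matches term by term the corresponding hypothesis of the flow theorem being invoked, and that the limiting geometric equation translates into (\ref{1.6}) via the support-function identifications noted above.
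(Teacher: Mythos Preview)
Your proposal is correct and matches the paper's own treatment: the paper explicitly states that Theorem~\ref{t1.3} is a direct corollary of Theorems~\ref{t1.1} and~\ref{xt1.3}, obtained by taking $\beta=n$ so that the exponents $n/\beta$ reduce to $1$ and the stationary equation becomes~(\ref{1.6}). The bookkeeping checks you outline (matching hypotheses and translating $\sigma_n(\lambda_i)=\det(D^2u+u\Rmnum{1})$, $X=\bar\nn u$) are exactly what is needed.
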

\textbf{Remark}: (1)  Note that  the power of curvature function is different between this theorem and Theorem \ref{t1.1}, \ref{xt1.3}. Thus $\varphi$ and $G$ are different between this theorem and Theorem \ref{t1.1}, \ref{xt1.3}.

(2) This theorem covers some well-known non-homogeneous results, e.g. \cite{GHW1,FHL,LL}. If $\varphi(x,u)=f(x)u^{1-p}$ and $G(X)=|X|^{q-n-1}$, the problem can be converted to the $L^p$ dual Minkowski problem, i.e.
\begin{equation}\label{1.7}
	\sigma_{n}(D^2u+u\Rmnum{1})=\psi(x)u^{p-1}(u^2+\vert Du\vert^2)^\frac{n+1-q}{2}.
\end{equation}
We derive existence results of this problem with $p>q$. Thus this theorem also covers some well-known homogeneous results, e.g. \cite{HLY,Z}, partial results in \cite{CL,DL3,DL4,HZ}. We mention that there only exists regular existence results of $L^p$ dual Minkowski problem for $p\ge q$ in smooth measure up to now, i.e. the solutions are non-even. We generalize the $L^p$ version to Orlicz version. Since we can do more processing in homogeneous case, there exists the existence result for $p=q$ in homogeneous case. Thus it is understandable that we don't get this result in non-homogeneous case.

\begin{theorem}\label{t1.4}
Let $\varphi:\mS^n\times(0,+\infty)\rightarrow(0,+\infty)$ and $G:\mR^{n+1}\backslash\{0\}\rightarrow(0,+\infty)$ be two smooth and even functions.

Suppose either

(\rmnum{1}) $\int_{\mR^{n+1}\backslash B_1}\varphi^{-1}(y)|y|^{-n}dy=+\infty$, and for some $\hat{C}_1>0$, we have
$$\int_{\mS^n}\int_{|\la x,\theta\ra|}^1\varphi^{-1}(x,s)dsdx\le\hat{C}_1, \quad \forall\theta\in\mS^n;$$

or (\rmnum{2}) $\int_{\mR^{n+1}\backslash B_1}\varphi^{-1}(y)|y|^{-n}dy<+\infty$, $\int_{ B_1}\varphi^{-1}(y)|y|^{-n}dy=+\infty$, and for some $\hat{C}_2>0$, we have
$$\int_{\mS^n}\int_{|\la x,\theta\ra|}^\infty\varphi^{-1}(x,s)dsdx\le\hat{C}_2, \quad \forall\theta\in\mS^n;$$


or (\rmnum{3}) $\int_{B_1}G(y)dy=+\infty$, and for some $\hat{C}_3>0$, we have
$$\int_{\mS^n}\int^\frac{1}{|\la\xi,\theta\ra|}_1G(r\xi)r^ndrd\xi\le\hat{C}_3, \quad \forall\theta\in\mS^n;$$

or (\rmnum{4}) $\int_{B_1}G(y)dy<+\infty$, $\int_{\mR^{n+1}\backslash B_1}G(y)dy=\infty$, and for some $\hat{C}_4>0$, we have
$$\int_{\mS^n}\int^\frac{1}{|\la\xi,\theta\ra|}_0G(r\xi)r^ndrd\xi \le\hat{C}_4, \quad \forall\theta\in\mS^n;$$

or (\rmnum{5}) $\int_{\mR^{n+1}\backslash B_1}\varphi^{-1}(y)|y|^{-n}dy<+\infty$, and for some $-q^*<p<0<q$ and $\hat{C}_5>0$, we have
$${\lim\sup}_{s\rightarrow0^+}\frac{\int_{s}^\infty\varphi^{-1}(x,t)dt}{s^p}<\infty,\qquad \int_{\mS^n}\int^{\rho(\xi)}_0G(r\xi)r^ndrd\xi\le \hat{C}_5\int_{\mS^n}\rho^q(\xi)d\xi;$$

or (\rmnum{6}) $\int_{B_1}G(y)dy<+\infty$, and for some $-q^*<p<0<q$ and $\hat{C}_6>0$, we have
$${\lim\sup}_{s\rightarrow0^+}\frac{\int_{0}^\frac{1}{s}G(r\xi)r^ndr}{s^p}<\infty,\qquad \int_{\mS^n}\int_\frac{1}{u(x)}^\infty\varphi^{-1}(x,s)dsdx\le \hat{C}_6\int_{\mS^n}u^q(x)dx.$$

\noindent Then there exists a  positive, smooth, uniformly convex even solution to  (\ref{1.6}) for some positive constant $c$.
\end{theorem}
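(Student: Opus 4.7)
The plan is to derive Theorem \ref{t1.4} as a direct specialization of Theorem \ref{t1.2} with $\beta=n$. Once $\beta=n$, the exponent $n/\beta$ appearing throughout Theorem \ref{t1.2} equals $1$, so $\varphi^{-n/\beta}=\varphi^{-1}$ and $G^{n/\beta}=G$, and the six hypotheses (\rmnum{1})--(\rmnum{6}) of Theorem \ref{t1.2} reduce verbatim to the six hypotheses of Theorem \ref{t1.4}. Under any one of these assumptions, Theorem \ref{t1.2} therefore produces a smooth, origin-symmetric, uniformly convex hypersurface $M_\infty$ whose support function $u$ satisfies
$$\varphi(\nu,u)\,G(X)\,\sigma_n(\lambda_i)=c$$
for some $c>0$.

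Next I would translate this stationary equation into (\ref{1.6}) by reparameterising $M_\infty$ via its inverse Gauss map: $\nu$ is then identified with the base point $x\in\mS^n$, the position vector becomes $X=\bar\nn u=ux+Du$, and the standard identity $\sigma_n(\lambda_i)=\det(D^2u+u\Rmnum{1})$ converts the product of principal curvature radii into the Monge--Amp\`ere operator on the sphere. After these substitutions the stationary equation reads
$$\varphi(x,u)\,G(\bar\nn u)\,\det(D^2u+u\Rmnum{1})=c,$$
which is exactly (\ref{1.6}); evenness of $u$ is the analytic restatement of the origin-symmetry of $M_\infty$.

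The one additional ingredient required is the quantitative size condition on the initial hypersurface $M_0$ demanded by cases (\rmnum{2}) and (\rmnum{4}) of Theorem \ref{t1.2}. To produce such an $M_0$ I would simply take a centred sphere of appropriate radius. In case (\rmnum{2}), the divergence $\int_{B_1}\varphi^{-1}(y)|y|^{-n}dy=+\infty$ rewrites in polar coordinates as $\int_0^1\!\int_{\mS^n}\varphi^{-1}(x,s)\,dx\,ds=+\infty$, so taking $u_{M_0}$ small enough forces $\int_{\mS^n}\int_{u_{M_0}}^\infty\varphi^{-1}(x,s)\,ds\,dx$ to exceed any prescribed $\hat{C}_2$; symmetrically, in case (\rmnum{4}) the divergence $\int_{\mR^{n+1}\backslash B_1}G(y)\,dy=\infty$ lets $\int_{\mS^n}\int_0^{\rho_{M_0}}G(r\xi)r^n\,dr\,d\xi$ be made arbitrarily large by choosing $\rho_{M_0}$ large.

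With this setup Theorem \ref{t1.4} is essentially a corollary of Theorem \ref{t1.2}. The real difficulty is not in the present reduction but inside Theorem \ref{t1.2} itself: establishing the long-time existence, a priori $C^0$, $C^1$ and $C^2$ bounds, and subsequential smooth convergence of flow (\ref{1.1}) uniformly in $\beta>0$ under the delicate integral hypotheses (\rmnum{1})--(\rmnum{6}). That analytic step is the main obstacle, while the translation from the flow's limit to the PDE (\ref{1.6}) performed here is essentially cosmetic.
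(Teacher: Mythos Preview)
Your proposal is correct and matches the paper's own approach: the paper explicitly states that Theorem \ref{t1.4} is a direct corollary of Theorem \ref{t1.2}, obtained by taking $\beta=n$ so that the exponents $n/\beta$ collapse to $1$ and the stationary equation $\varphi G\sigma_n^{\beta/n}=c$ becomes (\ref{1.6}). Your handling of the extra initial-data requirement in cases (\rmnum{2}) and (\rmnum{4}) via spheres of small or large radius is exactly the mechanism the paper indicates in the remark following Theorem \ref{t1.2}.
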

\textbf{Remark}: (1)  Note that  the power of curvature function is different between this theorem and Theorem \ref{t1.2}. Thus $\varphi$ and $G$ is different between this theorem and Theorem \ref{t1.2}.

(2) There have very little results about anisotropic non-homogeneous Gauss curvature or inverse Gauss curvature flows with global terms, e.g. \cite{BIS3,CLL}. The existence result with even Orlicz Minkowski problem corrsponding to
\begin{equation*}
	\int_{0}^1\varphi^{-1}(x,s)ds<+\infty \text{ and } \int_{1}^\infty\varphi^{-1}(x,s)ds=+\infty
\end{equation*}
can be deduced from \cite{BBC,HLYZ,BIS3}, i.e. $G=1$ and $\varphi(x,u)=f(x)g(u)$ in these papers. In fact, we can derive the existence result under a weaker condition
\begin{equation}\label{x1.12}
	\int_{0}^1\int_{\mS^n}\varphi^{-1}(x,s)dxds<+\infty \text{ and } \int_{1}^\infty\int_{\mS^n}\varphi^{-1}(x,s)dxds=+\infty.
\end{equation}
For general $\varphi(x,u)$, this situation is harder to deal with. In this theorem, we can find that the condition (\rmnum{1}) is weaker than (\ref{x1.12}). Thus we need to do lots of effort to deal with this situation. By the dual relation, we can derive a existence result with even dual Orlicz Minkowski type problem if
\begin{equation}\label{x1.13}
\int_{B_1}G(y)dy=+\infty \text{ and } \int_{\mR^{n+1}\backslash B_1}G(y)dy<+\infty.
\end{equation}
This case can be seen in \cite{CLL}. However in \cite{CLL} they need an extra condition to derive the existence result and we drop the extra condition. (\ref{x1.13}) is covered by case (\rmnum{3}).
In condition (\ref{x1.12}), we have
$$\int_{\mS^n}\int_{0}^{|\la x,\theta\ra|}\varphi^{-1}(x,s)dsdx<\int_{0}^1\int_{\mS^n}\varphi^{-1}(x,s)dxds<\infty \quad \forall\theta\in\mS^n.$$
Therefore, the integral conditions in (\rmnum{1}) and (\rmnum{2}) compared to the condition (\ref{x1.12}) are not more restrictive. Similarly, the integral conditions in (\rmnum{3}) and (\rmnum{4}) are also not more restrictive.

(3) The cases (\rmnum{1})-(\rmnum{2}) are inspired by \cite{BIS3}, where they derive the existence results about even Orlicz Minkowski problem. We generalize $1$ and $f(x)\varphi(u)$ to general $G$ and $\varphi(x,u)$ respectively and don't need any extra conditions. The cases (\rmnum{1})-(\rmnum{6}) yield new existence results for the regular even dual Orlicz Minkowski problem. To the best of our knowledge, the conditions in cases (\rmnum{3}), (\rmnum{4}), and (\rmnum{6}) are new in the study of the dual Orlicz Minkowski problem.

(4) If $\varphi(x,u) G(X)=f(x)u^{1-p}|X|^{q-n-1}$, the problem can be converted to the $L^p$ dual Minkowski problem (\ref{1.7}), i.e.
\begin{equation*}
\sigma_{n}(D^2u+u\Rmnum{1})=\psi(x)u^{p-1}(u^2+\vert Du\vert^2)^\frac{n+1-q}{2}.
\end{equation*}
We derive even existence results of this problem with $p>-1$, or $q<1$, or $-q^*<p<0<q$. We mention that so far there is no even existence result about the $L^p$ dual Minkowski problem for $-1<p<0$ or $0<q<1$ if the generalized Blaschke-Santal$\acute{o}$ inequality isn't used. In \cite{CHZ} they derived even existence results about the $L^p$ dual Minkowski problem for $p\ge0$ or $q\le0$.
In summary, we derive the existence results of $L^p$ dual Minkowski problem in smooth measure for $p>q$ and even $L^p$ dual Minkowski problem for $p>-1$, or $q<1$, or $-q^*<p<0<q$. Our argument covers lots of well-known results, cf. \cite{BBC,BIS,BIS3,CHZ,CLL,HLYZ,HLY,FH,BLY,HZ}, partial results in \cite{DL3,CL,LSW} etc.. To the best of our knowledge, 
 the results in this paper covers all existence results of $L^p$ dual Minkowski problem and even $L^p$ dual Minkowski problem in smooth measure up to now except $p=q$ or $q=n+1$ ($L^p$ Minkowski problem).

The rest of the paper is organized as follows. We first recall some notations and known results in Section 2 for later use. In Section 3, we  establish the $C^0$ estimates.
In Section 4, we have the a priori estimates since we had derived the bounds of $\sigma_{n}$ and $C^2$ estimates for a large ranges of parabolic equations in \cite{DL4}. Then we show the convergence of this flow and complete the proof of these theorems.
\section{Preliminary}
\subsection{Intrinsic curvature}
We now state some general facts about hypersurfaces, especially those that can be written as graphs. The geometric quantities of ambient spaces will be denoted by $(\bar{g}_{\alpha\beta})$, $(\bar{R}_{\alpha\beta\gamma\delta})$ etc., where Greek indices range from $0$ to $n$. Quantities for $M$ will be denoted by $(g_{ij})$, $(R_{ijkl})$ etc., where Latin indices range from $1$ to $n$.

Let $\nabla$, $\bar\nabla$ and $D$ be the Levi-Civita connection of $g$, $\bar g$ and the Riemannian metric $e$ of $\mathbb S^n$  respectively. All indices appearing after the semicolon indicate covariant derivatives. The $(1,3)$-type Riemannian curvature tensor is defined by
\begin{equation*}
	R(U,Y)Z=\nabla_U\nabla_YZ-\nabla_Y\nabla_UZ-\nabla_{[U,Y]}Z,
\end{equation*}
or with respect to a local frame $\{e_i\}$,
\begin{equation*}
	R(e_i,e_j)e_k={R_{ijk}}^{l}e_l,
\end{equation*}
where we use the summation convention (and will henceforth do so).  Ricci identities read
\begin{equation*}
	Y_{;ij}^k-Y_{;ji}^k=-{R_{ijm}}^kY^m
\end{equation*}
for all vector fields $Y=(Y^k)$. We also denote the $(0,4)$ version of the curvature tensor by $R$,
\begin{equation*}
	R(W,U,Y,Z)=g(R(W,U)Y,Z).
\end{equation*}
\subsection{Extrinsic curvature}
The induced geometry of $M$ is governed by the following relations. The second fundamental form $h=(h_{ij})$ is given by the Gaussian formula
\begin{equation*}
	\bar\nabla_ZY=\nabla_ZY-h(Z,Y)\nu,
\end{equation*}
where $\nu$ is a local outer unit normal field. Note that here (and in the rest of the paper) we will abuse notation by disregarding the necessity to distinguish between a vector $Y\in T_pM$ and its push-forward $X_*Y\in T_p\mathbb{R}^{n+1}$. The Weingarten endomorphism $A=(h_j^i)$ is given by $h_j^i=g^{ki}h_{kj}$, and the Weingarten equation
\begin{equation*}
	\bar\nabla_Y\nu=A(Y),
\end{equation*}
holds there, or in coordinates
\begin{equation*}
	\nu_{;i}^\alpha=h_i^kX_{;k}^\alpha.
\end{equation*}
We also have the Codazzi equation in $\mathbb{R}^{n+1}$
\begin{equation*}
	\nabla_Wh(Y,Z)-\nabla_Zh(Y,W)=-\bar{R}(\nu,Y,Z,W)=0
\end{equation*}
or
\begin{equation*}
	h_{ij;k}-h_{ik;j}=-\bar R_{\alpha\beta\gamma\delta}\nu^\alpha X_{;i}^\beta X_{;j}^\gamma X_{;k}^\delta=0,
\end{equation*}
and the Gauss equation
\begin{equation*}
	R(W,U,Y,Z)=\bar{R}(W,U,Y,Z)+h(W,Z)h(U,Y)-h(W,Y)h(U,Z)
\end{equation*}
or
\begin{equation*}
	R_{ijkl}=\bar{R}_{\alpha\beta\gamma\delta}X_{;i}^\alpha X_{;j}^\beta X_{;k}^\gamma X_{;l}^\delta+h_{il}h_{jk}-h_{ik}h_{jl},
\end{equation*}
where
\begin{equation*}
	\bar{R}_{\alpha\beta\gamma\delta}=0.
\end{equation*}

\subsection{Convex hypersurface parametrized by the inverse Gauss map}
Let $M$ be a smooth, closed and uniformly convex hypersurface in $\mR^{n+1}$. Assume that $M$ is parametrized by the inverse Gauss map $X: \mS^n\to M\subset \mR^{n+1}$ and encloses origin. The support function $u: \mS^n\to \mR^1$ of $M$ is defined by $$u(x)=\sup_{y\in M}\la x,y\ra.$$ The supremum is attained at a point $y=X(x)$ because of convexity, $x$ is the unit outer normal of $M$ at $y$. Hence $u(x)=\la x,X(x)\ra$. Then we have
\begin{equation*}
X=Du+ux\quad \text{    and    } \quad\rho=\sqrt{u^2+|Du|^2}.
\end{equation*}

The second fundamental form of $M$ in terms of $u$ is given by
$$h_{ij}=D_iD_ju+ue_{ij},$$
and the principal radii of curvature are the eigenvalues of the matrix
\begin{equation}
	b_{ij}=h^{ik}g_{jk}=h_{ij}=D_{ij}^2u+u\delta_{ij}.
\end{equation}
The proof can be seen in Urbas \cite{UJ}. We see that the support function satisfies the initial value problem by \cite{DL} Section 2,
\begin{equation}\label{2.2}
	\begin{cases}
		&\frac{\p u}{\p t}=\(\eta(t)\varphi(\nu,u)G(ux+Du)\sigma_{n}^{\frac{\beta}{n}}([D^2u+u\uppercase\expandafter{\romannumeral1}])-1\)u\; \text{ on } \mS^n\times[0,\infty),\\
		&u(\cdot,0)=u_0,
	\end{cases}
\end{equation}
where $\uppercase\expandafter{\romannumeral1}$ is the identity matrix, $u_0$ is the support function of $M_0$, and
\begin{equation}
	F([a_{j}^i])=f(\mu_1,\cdots,\mu_n),
\end{equation}
where $\mu_1,\cdots,\mu_n$ are the eigenvalues of matrix $[a^i_{j}]$. It is not difficult to see that the eigenvalues of $[F^{j}_i]=[\frac{\p F}{\p a^i_{j}}]$ are $\frac{\p f}{\p\mu_1},\cdots,\frac{\p f}{\p\mu_n}$.  We also have the formula
\begin{equation}\label{2.4}
\frac{\p_t\rho}{\rho}(\xi)=\frac{\p_tu}{u}(x),
\end{equation}
which can be also found in \cite{LSW,CL}.

It is well-known that the determinants of the Jacobian of radial Gauss mapping $\sA$ and reverse radial Gauss mapping $\sA^*$ of $\Om$ are given by, see e.g. \cite{HLY2,CCL,LSW},
\begin{equation}\label{2.5}
	\vert Jac\sA\vert(\xi)=\vert\frac{dx}{d\xi}\vert=\frac{\rho^{n+1}(\xi)K(\vec{\rho}(\xi))}{u(\sA(\xi))},
\end{equation}
and
\begin{equation}\label{2.6}
	\vert Jac\sA^*\vert(x)=\vert\frac{d\xi}{dx}\vert=\dfrac{u(x)}{\rho^{n+1}(\sA^*(x))K(\nu^{-1}_\Om(x))}.
\end{equation}

Before closing this section, we give the following basic properties for any given $\Om$ contained the origin.
\begin{lemma}\cite{CL}\label{l2.1}
Let $\Om$ contain the origin. Let $u$ and $\rho$ be the support function and radial function of $\Om$, and $x_{\max}$ and $\xi_{\min}$ be two points such that $u(x_{\max})=\max_{\mS^n}u$ and $\rho(\xi_{\min})=\min_{\mS^n}\rho$. Then
\begin{align}
\max_{\mS^n}u=\max_{\mS^n}\rho\quad \text{    and    }\quad \min_{\mS^n}u=\min_{\mS^n}\rho,\label{2.7}\\
u(x)\ge x\cdot x_{\max}u(x_{\max}),\quad \forall x\in\mS^n,\label{2.8}\\
\rho(\xi)\xi\cdot\xi_{\min}\le\rho(\xi_{\min}),\quad \forall \xi\in\mS^n.\label{2.9}
\end{align}
\end{lemma}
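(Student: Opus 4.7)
The plan is to use the following representation of the support function, which holds whenever $\Omega$ is star-shaped about the origin. Since $\Omega = \{t\xi : \xi \in \mS^n,\, 0 \le t \le \rho(\xi)\}$, one has
\begin{equation*}
u(x) \;=\; \sup_{y\in\Omega}\langle x, y\rangle \;=\; \sup_{\xi\in\mS^n}\rho(\xi)\,\langle x,\xi\rangle_{+}.
\end{equation*}
Two pointwise bounds fall out immediately: taking $\xi=x$ gives $u(x)\ge\rho(x)$ for every $x\in\mS^n$, while $\langle x,\xi\rangle\le 1$ gives $u(x)\le\max_{\mS^n}\rho$. Passing to the sup and inf over $x$ yields simultaneously $\max u\ge\max\rho$, $\min u\ge\min\rho$, and $\max u\le\max\rho$, which already proves the maximum half of (2.7).

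For the reverse inequality $\min u\le\min\rho$, I would use a geometric observation about extremizers of $\rho$. At any minimizer $\xi_{\min}$ of $\rho$, the point $P_{\min}:=\rho(\xi_{\min})\xi_{\min}$ is a point of $\partial\Omega$ closest to the origin, and convexity forces the outward unit normal at $P_{\min}$ to equal $\xi_{\min}$ itself: otherwise a supporting hyperplane at $P_{\min}$ with a different normal would, by convexity of $\Omega$, admit a boundary point strictly closer to the origin than $P_{\min}$, contradicting the minimality of $\rho(\xi_{\min})$. Hence $u(\xi_{\min}) = \langle \xi_{\min}, P_{\min}\rangle = \rho(\xi_{\min})$, so $\min u\le u(\xi_{\min}) = \min\rho$. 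Running the same argument at a maximizer $\xi^{*}$ of $\rho$ yields $u(\xi^{*}) = \rho(\xi^{*}) = \max\rho = \max u$, so $\xi^{*}$ is a legitimate choice of $x_{\max}$ and for this choice $y^{*}:=u(x_{\max})x_{\max}=\rho(\xi^{*})\xi^{*}\in\Omega$.

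With these two normal-alignment facts, the two pointwise inequalities become one-line consequences. For (2.8), since $y^{*}\in\Omega$,
\begin{equation*}
u(x) \;=\; \sup_{y\in\Omega}\langle x, y\rangle \;\ge\; \langle x, y^{*}\rangle \;=\; u(x_{\max})\,\langle x, x_{\max}\rangle.
\end{equation*}
For (2.9), the supporting hyperplane at $P_{\min}$ with outward normal $\xi_{\min}$ yields the half-space containment $\Omega\subset\{y: \langle \xi_{\min}, y\rangle\le\rho(\xi_{\min})\}$, and evaluating this at the point $y=\rho(\xi)\xi\in\Omega$ immediately produces the claimed bound. The only delicate ingredient is the assertion that an extremizer of $\rho$ has the corresponding radial direction as an outward normal; in the smooth category this follows from $\nabla(\rho^{2})=0$ forcing the tangent plane of $\partial\Omega$ at $P_{\min}$ (resp.\ $P^{*}:=\rho(\xi^{*})\xi^{*}$) to coincide with that of the sphere $|y|=\rho(\xi_{\min})$ (resp.\ $|y|=\rho(\xi^{*})$), and in general it is a short supporting-hyperplane argument, so I expect this to be the main but quite standard obstacle.
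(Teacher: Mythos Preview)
The paper does not supply a proof of this lemma; it is quoted from \cite{CL}. Your argument is correct in substance, and the route via $u(x)=\sup_{\xi\in\mS^n}\rho(\xi)\langle x,\xi\rangle_{+}$ together with the normal-alignment observation at extremizers of $\rho$ is the standard one.

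One small point deserves attention. As stated, (2.8) is asserted for \emph{any} point $x_{\max}$ with $u(x_{\max})=\max_{\mS^n}u$, not only for the particular $\xi^{*}$ you construct as a maximizer of $\rho$; your proof, by saying ``for this choice'', covers only the latter. The fix is immediate and does not require the normal-alignment step at all: if $x_{\max}$ is any maximizer of $u$ and $y_{0}\in\overline{\Omega}$ attains $u(x_{\max})=\langle x_{\max},y_{0}\rangle$, then
\[
u(x_{\max})=\langle x_{\max},y_{0}\rangle\le|y_{0}|\le\max_{\mS^n}\rho=\max_{\mS^n}u=u(x_{\max}),
\]
so equality holds throughout; the Cauchy--Schwarz equality forces $y_{0}=u(x_{\max})x_{\max}\in\Omega$, and then (2.8) follows exactly as you wrote. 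With this adjustment your proof is complete.
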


We shall use the following generalized Blaschke-Santal$\acute{o}$ inequality. It was first proved in \cite{CHD}. In \cite{CCL} they gave a simple proof.
\begin{lemma}(Blaschke-Santal$\acute{o}$-type inequality \cite{CHD})\label{l2.2} Given $q>0$, let $q^*>0$ be the number given by (\ref{1.3}). For $s\in(0,q^*]$, $s\ne+\infty$, there is a constant $C_{n,q,s}>0$ such that
	\begin{equation*}
		\(\int_{\mS^n}\rho^q_\Om d\sigma_{\mS^n}\)^\frac{1}{q}\(\int_{\mS^n}\rho^s_{\Om^*} d\sigma_{\mS^n}\)^\frac{1}{s}\le C_{n,q,s} \quad \text{ for all } \Om\in\cK^e_0,
	\end{equation*}
	where $\Om^*$ is the dual body of $\Om$, $\cK^e_0$ is the set of all origin-symmetric convex bodies containing the origin in their interiors.
\end{lemma}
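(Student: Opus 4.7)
The plan is to bootstrap from the classical Blaschke-Santal\'o inequality $|\Om||\Om^*| \le |B_1|^2$ for origin-symmetric convex bodies, interpolating to arbitrary exponent pairs via H\"older's inequality. First, I would use the identity $\rho_{\Om^*}(\xi) = 1/u_\Om(\xi)$ (valid since $\Om$ contains the origin) to rewrite
$$\int_{\mS^n}\rho^s_{\Om^*}\,d\sigma_{\mS^n} = \int_{\mS^n} u_\Om^{-s}\,d\sigma_{\mS^n}.$$
In polar-coordinate form the classical inequality becomes
$$\frac{1}{(n+1)^2}\int_{\mS^n}\rho_\Om^{n+1}\,d\sigma \int_{\mS^n}u_\Om^{-(n+1)}\,d\sigma = |\Om||\Om^*| \le |B_1|^2,$$
which gives the lemma in the endpoint diagonal case $q = s = n+1$, noting that by (\ref{1.3}) one has $(n+1)^* = n+1$ in the first branch.

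Next I would exploit scale invariance: the product $\bigl(\int \rho_\Om^q\bigr)^{1/q}\bigl(\int u_\Om^{-s}\bigr)^{1/s}$ is invariant under $\Om \mapsto \lambda\Om$ since $\Om^* \mapsto \lambda^{-1}\Om^*$, so I normalize $\int_{\mS^n}\rho_\Om^{n+1}\,d\sigma$ to a fixed value, whence the classical bound forces $\int u_\Om^{-(n+1)} \le C_n$. The three branches in the definition of $q^*$ correspond to three interpolation regimes: for $1 < q < n+1$, H\"older's inequality applied to both factors reduces the claim directly to the normalized endpoint bound; for $q \ge n+1$, the relation $n/q + 1/q^* = 1$ identifies $q/n$ and $q^*$ as H\"older conjugates, and I would combine H\"older applied with these exponents on $\int \rho_\Om^q$ (writing $\rho_\Om^q = \rho_\Om^{n+1} \cdot \rho_\Om^{q-n-1}$ after an appropriate splitting) with a control of $\int u_\Om^{-q^*}$ coming from H\"older on the second factor ($s \le q^* \le n+1$ in this branch) against the classical bound.

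The remaining case $0 < q \le 1$, where $q^* = +\infty$ allows arbitrary $s$, is the most delicate. Here I would invoke the uniform comparison $\rho_\Om \le C_n u_\Om$ available for origin-symmetric convex bodies (via John's ellipsoid, giving $\rho_\Om/u_\Om \le \sqrt n$ after affine adjustment), which converts control of high powers of $u_\Om^{-1}$ into control of $\rho_\Om$, at the cost of a dimensional constant. Combined with the classical Santal\'o bound after H\"older, this yields the inequality for all admissible $s$.

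The main obstacle is the bookkeeping of exponents: verifying that the pair $(q, s)$ lies in the H\"older-admissible range corresponding to each branch of the piecewise definition of $q^*$, and showing that the transitions between branches are continuous. The origin-symmetry hypothesis enters crucially in two places, first to apply the classical Blaschke-Santal\'o inequality with the dimensional constant $|B_1|^2$, and second to secure the uniform comparison between $\rho_\Om$ and $u_\Om$ needed in the boundary regime $q \le 1$; without symmetry one would incur an additional Santal\'o point translation that would break the direct substitution $\rho_{\Om^*} = 1/u_\Om$.
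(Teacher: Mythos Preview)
The paper does not prove this lemma; it is quoted as a known result from \cite{CHD}, with a simpler argument attributed to \cite{CCL}. So there is no in-paper proof to compare against, and your sketch has to stand on its own.

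Your power-mean reduction to the classical Santal\'o endpoint $q=s=n+1$ is valid whenever \emph{both} $q\le n+1$ and $s\le n+1$, and the duality $(\Om,q,s)\leftrightarrow(\Om^*,s,q)$ coming from $\rho_{\Om^*}=1/u_\Om$ does let you swap the two exponents. But this still leaves a genuine gap: the endpoint case $q>n+1$, $s=q^*=q/(q-n)<n+1$, where $\bigl(\int\rho_\Om^q\bigr)^{1/q}$ \emph{cannot} be bounded above by $\bigl(\int\rho_\Om^{n+1}\bigr)^{1/(n+1)}$ (the power-mean inequality runs the other way). Your proposed splitting $\rho_\Om^q=\rho_\Om^{n+1}\cdot\rho_\Om^{q-n-1}$ does not close this, since H\"older applied to a product can only push the exponents \emph{up}, not down. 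By the duality, the same obstruction reappears in the branch $1<q<n+1$ whenever $s>n+1$, and in the branch $0<q\le1$ whenever $s>n+1$; so essentially every non-trivial case of the lemma reduces to exactly the regime your argument does not handle.

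The separate argument you propose for $0<q\le1$ is also flawed. The bound $\rho_\Om\le u_\Om$ holds pointwise with constant $1$ for any convex body containing the origin, so it carries no content in the direction you need; the reverse inequality $u_\Om\le C_n\rho_\Om$ is false for thin symmetric bodies. More importantly, the product $\bigl(\int\rho_\Om^q\bigr)^{1/q}\bigl(\int\rho_{\Om^*}^s\bigr)^{1/s}$ is scale-invariant but \emph{not} affine-invariant, so ``affine adjustment'' to John position is not a legitimate normalization here. The arguments in the cited references require input beyond H\"older interpolation from the classical volume product; one route is to exploit the pointwise constraint $\rho_\Om(\xi)\,\rho_{\Om^*}(x)\,|\langle\xi,x\rangle|\le1$ (which follows from $u_\Om(x)\ge\rho_\Om(\xi)\langle x,\xi\rangle$) directly rather than only through the integrated Santal\'o bound.
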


\section{Uniform bounds of support function}
In this section we will establish the $C^0$ estimate along flow (\ref{1.1}). Since  condition (\rmnum{2}) in Theorem \ref{t1.1} can be seen in Theorem \ref{t1.2}, we define the following quantities by the classification in Theorem \ref{t1.2}.

Define
\begin{equation}\label{3.1}
J(t)=\begin{cases}
		\int_{\mS^n}\int^{C_0}_u\varphi^{-\frac{n}{\beta}}(x,s)dsdx, &\text{ if cases  (\rmnum{2}), (\rmnum{5}) and (\rmnum{6}) hold,}\\
		\int_{\mS^n}\int_{C_0}^u\varphi^{-\frac{n}{\beta}}(x,s)dsdx, &\text{ if cases (\rmnum{1}), (\rmnum{3}) and (\rmnum{4}) hold,}
	\end{cases}
\end{equation}
\begin{equation}\label{3.2}
V(t)=\begin{cases}
\int_{\Om_t}G(y)^{\frac{n}{\beta}}dy, &\text{ if cases (\rmnum{4})-(\rmnum{6}) hold,}\\
\int_{B_{R}\backslash \Om_t}G(y)^{\frac{n}{\beta}}dy-\int_{\Om_t\backslash B_R}G(y)^{\frac{n}{\beta}}dy, &\text{ if cases (\rmnum{1})-(\rmnum{3}) hold,}
	\end{cases}
\end{equation}
where $C_0$ and $R$ are determined later. Through a simple calculation, we also have
\begin{equation}\label{3.3}
	V(t)=\begin{cases}
		\int_{\mS^n}\int_0^\rho G(r\xi)^{\frac{n}{\beta}}r^ndrd\xi, &\text{ if cases (\rmnum{4})-(\rmnum{6}) hold,}\\
		\int_{\mS^n}\int_\rho^R G(r\xi)^{\frac{n}{\beta}}r^ndrd\xi, &\text{ if cases (\rmnum{1})-(\rmnum{3}) hold.}
	\end{cases}
\end{equation}

We give the monotonicity of these quantities defined above.
\begin{lemma}\label{l3.1}
Along flow (\ref{1.1}), $V(t)$ remains unchanged. $J(t)$ is non-increasing if the cases  (\rmnum{1}), (\rmnum{3}) and (\rmnum{4}) hold. $J(t)$ is non-decreasing if the cases (\rmnum{2}), (\rmnum{5}) and (\rmnum{6}) hold. $J(t)$ remains unchanged if and only if $M_t$ satisfies $\varphi(x,u)G(X)\sigma_{n}^\frac{\beta}{n}=c.$
\end{lemma}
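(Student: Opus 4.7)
The plan is to differentiate $V(t)$ and $J(t)$ in time, convert integrals written with $d\xi$ to integrals with $dx$ via the Jacobian identity $\rho^{n+1}\,d\xi = u\sigma_n\,dx$ (a consequence of (\ref{2.5})--(\ref{2.6})), and then exploit the precise form of the global factor $\eta(t)$ in (\ref{1.2}).

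For $V$, I would differentiate under the integral, use (\ref{2.4}) to rewrite $\rho_t = \rho\,u_t/u$ at the corresponding $x = \sA(\xi)$, and then (\ref{1.4}) to substitute $u_t = (\eta\varphi G\sigma_n^{\beta/n}-1)u$. After changing variables to $dx$, $\dot V$ becomes a difference of two integrals which, by the very definition of $\eta$ in (\ref{1.2}), are equal; hence $\dot V = 0$. The cases (\rmnum{1})--(\rmnum{3}) differ only by an overall sign in the definition of $V$, so the same computation applies.

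For $J$, the analogous differentiation produces, up to a sign determined by the case,
\[
\dot J = \pm(\eta\, I_1 - I_4),\qquad I_1 := \int_{\mathbb{S}^n}\varphi^{1-n/\beta} G\sigma_n^{\beta/n} u\, dx,\quad I_4 := \int_{\mathbb{S}^n}\varphi^{-n/\beta} u\, dx.
\]
Writing $\eta = I_2/I_3$ with $I_2 := \int_{\mathbb{S}^n} uG^{n/\beta}\sigma_n\,dx$ and $I_3 := \int_{\mathbb{S}^n} u\varphi G^{n/\beta+1}\sigma_n^{\beta/n+1}\,dx$, the desired monotonicity (non-increasing in (\rmnum{1}), (\rmnum{3}), (\rmnum{4}); non-decreasing in (\rmnum{2}), (\rmnum{5}), (\rmnum{6})) reduces to the single inequality $I_1 I_2 \le I_3 I_4$, with equality characterizing stationary solutions.

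The key step, which I expect to be the main obstacle, is to notice that $I_1, I_2, I_3, I_4$ are all moments of a single function under a single measure. Setting $F := \varphi(x,u) G(X) \sigma_n^{\beta/n}$ and $d\nu := \varphi^{-n/\beta} u\,dx$, the algebraic identity $F^{n/\beta} = (\varphi G)^{n/\beta}\sigma_n$ gives
\[
I_4 = \int_{\mathbb{S}^n} d\nu,\ \ I_1 = \int_{\mathbb{S}^n} F\,d\nu,\ \ I_2 = \int_{\mathbb{S}^n} F^{n/\beta}\,d\nu,\ \ I_3 = \int_{\mathbb{S}^n} F^{1+n/\beta}\,d\nu.
\]
Since $F > 0$ and $n/\beta > 0$, the functions $F$ and $F^{n/\beta}$ are comonotone, so Chebyshev's integral inequality applied to the finite positive measure $\nu$ yields $I_1 I_2 \le I_3 I_4$, with equality iff $F$ is $\nu$-a.e.\ constant. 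By positivity of the density $\varphi^{-n/\beta}u$, this is equivalent to $\varphi(x,u) G(X)\sigma_n^{\beta/n} \equiv c$ on $\mathbb{S}^n$, completing the proof.
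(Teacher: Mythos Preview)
Your proof is correct and follows the same overall structure as the paper: differentiate $V$ and $J$, use (\ref{2.4}) and the Jacobian (\ref{2.6}) to pass between $d\xi$ and $dx$, and reduce the monotonicity of $J$ to the moment inequality $I_1 I_2 \le I_3 I_4$ for the function $F=\varphi G\sigma_n^{\beta/n}$ against the measure $d\nu=u\varphi^{-n/\beta}\,dx$.

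The only substantive difference is in how that moment inequality is justified. The paper applies H\"older's inequality twice with conjugate exponents $\frac{n+\beta}{n}$ and $\frac{n+\beta}{\beta}$, obtaining
\[
\int F\,d\nu\le\Big(\int d\nu\Big)^{\frac{n}{n+\beta}}\Big(\int F^{\frac{n}{\beta}+1}\,d\nu\Big)^{\frac{\beta}{n+\beta}},\qquad
\int F^{\frac{n}{\beta}}\,d\nu\le\Big(\int d\nu\Big)^{\frac{\beta}{n+\beta}}\Big(\int F^{\frac{n}{\beta}+1}\,d\nu\Big)^{\frac{n}{n+\beta}},
\]
and multiplies them. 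You instead observe that $F$ and $F^{n/\beta}$ are comonotone and invoke Chebyshev's integral inequality in one stroke. Both arguments give the same inequality with the same equality case (equality in H\"older forces $F^{n/\beta+1}$ proportional to $1$, i.e.\ $F$ constant; equality in Chebyshev forces one of the factors constant, hence $F$ constant since $t\mapsto t^{n/\beta}$ is strictly increasing). Your route is slightly more direct and avoids tracking the specific exponent splitting; the paper's route makes the role of the exponent $n/\beta$ more visible. Either way, the equality characterization combined with the positivity and smoothness of the density $u\varphi^{-n/\beta}$ yields $\varphi(x,u)G(X)\sigma_n^{\beta/n}\equiv c$ on $\mathbb{S}^n$, as required.
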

\begin{proof}
Along flow (\ref{1.1}), by (\ref{1.2}), (\ref{2.2}), (\ref{2.4}) and (\ref{2.6}), we have
\begin{align*}
\frac{d}{dt}V(t)=&\pm\int_{\mS^n}G(\rho\xi)^{\frac{n}{\beta}}\rho^n\p_t\rho(\xi,t) d\xi\\
=&\pm\int_{\mS^n}G(\rho\xi)^{\frac{n}{\beta}}\rho^{n+1}\frac{\p_tu(x,t)}{u} d\xi\\
=&\pm\int_{\mS^n}G(\bar\nn u)^{\frac{n}{\beta}}\sigma_{n}\p_tu(x,t) dx\\
=&\pm\(\int_{\mS^n}u\eta(t)\varphi(x,u)G^{\frac{n}{\beta}+1}(\bar\nn u)\sigma_{n}^{\frac{\beta}{n}+1}dx-\int_{\mS^n}G(\bar\nn u)^{\frac{n}{\beta}}\sigma_{n}udx\)\\
=&0.
\end{align*}
Here $+$ is for the cases (\rmnum{4})-(\rmnum{6}), and $-$ is for the cases (\rmnum{1})-(\rmnum{3}).
\begin{equation}\label{3.4}
\begin{split}
\frac{d}{dt}J(t)=&\pm\int_{\mS^n}\varphi^{-\frac{n}{\beta}}\p_tudx\\
=&\pm\(\eta(t)\int_{\mS^n}u\varphi^{1-\frac{n}{\beta}}G\sigma_{n}^{\frac{\beta}{n}}dx-\int_{\mS^n}u\varphi^{-\frac{n}{\beta}}dx\)\\
=&\pm\(\int_{\mS^n}u\varphi(x,u)G^{\frac{n}{\beta}+1}\sigma_{n}^{\frac{\beta}{n}+1}dx\)^{-1}\\
&\(\int\varphi G\sigma_{n}^{\frac{\beta}{n}}d\mu\int(\varphi G\sigma_{n}^{\frac{\beta}{n}})^\frac{n}{\beta}d\mu-\int d\mu\int(\varphi G\sigma_{n}^{\frac{\beta}{n}})^{\frac{n}{\beta}+1}d\mu\),
\end{split}
\end{equation}
where we denote $d\mu=u\varphi^{-\frac{n}{\beta}}dx$.
Here $+$ is for the cases cases (\rmnum{1}), (\rmnum{3}) and (\rmnum{4}), and $-$ is for the cases  (\rmnum{2}), (\rmnum{5}) and (\rmnum{6}). Let $Q=\varphi G\sigma_{n}^{\frac{\beta}{n}}$. By H$\ddot{o}$lder inequality, we have
\begin{align*}
\int Qd\mu\le \(\int d\mu\)^\frac{n}{n+\beta}\(\int Q^{\frac{n}{\beta}+1}d\mu\)^\frac{\beta}{n+\beta},\\
\int Q^\frac{n}{\beta}d\mu\le \(\int d\mu\)^\frac{\beta}{n+\beta}\(\int Q^{\frac{n}{\beta}+1}d\mu\)^\frac{n}{n+\beta}.
\end{align*}
All equalities hold if and only if $Q=c$.
Thus, by (\ref{3.4}) we completes the proof of this lemma.
\end{proof}

After these preparations, we now derive the uniform bounds of $u$. To use the maximum principle, we suppose $M(t):=\max u_{M_t}$ is attained at $e_{n+1}\in\mS^n$ and $m(t):=\min \rho_{M_t}$ is attained at $e_{1}\in\mS^n$ in the following proofs. Firstly we show the lower bound of $u$ for the cases (\rmnum{3}) and (\rmnum{4}) of Theorem \ref{t1.2}.
\begin{lemma}\label{l3.2}
	Under flow (\ref{1.1}), the corresponding assumptions of Theorem \ref{t1.2} and the cases (\rmnum{3}) and (\rmnum{4}), there exists a positive constant $C_1$ independent of $t$, such that
	$$u(x,t)\ge \frac{1}{C_{1}}.$$
	It also means that
	$$\rho\ge \frac{1}{C_{1}}$$
	by Lemma \ref{l2.1}.
\end{lemma}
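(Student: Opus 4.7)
The plan is to argue by contradiction, leveraging the conservation of $V(t)$ established in Lemma \ref{l3.1}. By (2.7), $\min u(\cdot,t)=\min \rho(\cdot,t)=:m(t)$, so it suffices to bound $m(t)$ uniformly from below. Assume for contradiction that $m(t_k)\to 0$ along some sequence $t_k$, and, after a rotation, that this minimum is attained at $e_1$. The essential geometric input is origin-symmetry: since $\Omega_{t_k}=-\Omega_{t_k}$, one has $u_{M_{t_k}}(\pm e_1)=m(t_k)$, so $\Omega_{t_k}\subset\{y:|\langle y,e_1\rangle|\le m(t_k)\}$. In polar form this reads $\rho(\xi,t_k)\le m(t_k)/|\xi_1|$, where $\xi_1:=\langle\xi,e_1\rangle$.

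In case (\rmnum{4}), (3.3) gives $V(t)=\int_{\mS^n}\int_0^{\rho(\xi,t)}G(r\xi)^{n/\beta}r^n\,dr\,d\xi$. For $m(t_k)\le 1$ the slab estimate and monotonicity yield
\[
V(t_k)\le \int_{\mS^n}\int_0^{m(t_k)/|\xi_1|}G(r\xi)^{n/\beta}r^n\,dr\,d\xi \le \int_{\mS^n}\int_0^{1/|\xi_1|}G(r\xi)^{n/\beta}r^n\,dr\,d\xi \le \hat{C}_4,
\]
by the integral hypothesis of case (\rmnum{4}) applied with $\theta=e_1$. This contradicts $V(t_k)=V(0)>\hat{C}_4$, which is precisely the extra initial condition imposed on $M_0$ in Theorem \ref{t1.2}. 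Thus $m(t)\ge 1$ in case (\rmnum{4}).

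In case (\rmnum{3}), choose $R\ge 1$ large enough that $\Omega_0\subset B_R$ and $V(0)$ is finite (the singularity of $G^{n/\beta}$ at the origin lies well inside $\Omega_0$, so $\int_{B_R\setminus\Omega_0}G^{n/\beta}dy<\infty$). By (3.3), $V(t)=\int_{\mS^n}\int_{\rho(\xi,t)}^R G(r\xi)^{n/\beta}r^n\,dr\,d\xi$. Split $\mS^n$ according to whether $|\xi_1|>m(t_k)/R$. On this set $\rho(\xi,t_k)\le m(t_k)/|\xi_1|<R$, and the positive contribution to $V(t_k)$ is bounded below by
\[
\int_{\{|\xi_1|>m(t_k)/R\}}\int_{m(t_k)/|\xi_1|}^{R} G(r\xi)^{n/\beta}r^n\,dr\,d\xi,
\]
which increases to $\int_{\mS^n}\int_0^R G(r\xi)^{n/\beta}r^n\,dr\,d\xi=\int_{B_R}G^{n/\beta}dy=+\infty$ by monotone convergence, since $\int_{B_1}G^{n/\beta}=+\infty$ and $R\ge 1$. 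On the complement, where $\rho$ may exceed $R$, the negative contribution is dominated, for $m(t_k)\le 1$, by
\[
\int_{\{|\xi_1|\le m(t_k)/R\}}\int_R^{m(t_k)/|\xi_1|}G(r\xi)^{n/\beta}r^n\,dr\,d\xi \le \int_{\mS^n}\int_1^{1/|\xi_1|}G(r\xi)^{n/\beta}r^n\,dr\,d\xi \le \hat{C}_3.
\]
Together these force $V(t_k)\to +\infty$, contradicting $V(t_k)=V(0)<+\infty$ and yielding a uniform positive lower bound $m(t)\ge 1/C_1$.

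The principal obstacle is case (\rmnum{3}), where $V$ is a signed difference of two integrals, and where a priori one has neither an upper bound on $\rho$ nor a one-sided inclusion between $\Omega_t$ and $B_R$. The choice $R\ge 1$ plays a double role: it activates the integrability bound $\hat{C}_3$ on the $\Omega_t\setminus B_R$ part via the $r\ge 1$ starting point, while simultaneously guaranteeing $\int_{B_R}G^{n/\beta}=+\infty$, which is what drives the blow-up of the $B_R\setminus\Omega_t$ part through the slab decomposition. Matching the slab cut-off $|\xi_1|\gtrless m(t_k)/R$ with these two estimates is the delicate step; in case (\rmnum{4}) the one-sided form of $V$ makes the analogous computation much cleaner.
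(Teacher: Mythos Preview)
Your proof is correct. For case~(iv) your argument is essentially identical to the paper's. For case~(iii), however, you take a genuinely different route. The paper sets $R=1$, bounds $\int_{\Omega_t\setminus B_1}G^{n/\beta}\le\hat C_3$ via the slab estimate $\rho\le m/|\xi_1|$ just as you do, but then handles $\int_{B_1\setminus\Omega_t}G^{n/\beta}$ by a compactness argument: Blaschke selection on $L_k=B_1\cap\Omega_{t_k}$ extracts a limit $L$ contained in a hyperplane, hence $\chi_{B_1\setminus\Omega_{t_k}}\to1$ a.e., and Fatou's lemma forces $\liminf\int_{B_1\setminus\Omega_{t_k}}G^{n/\beta}\ge\int_{B_1}G^{n/\beta}=+\infty$. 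You instead push the slab bound through both pieces directly: the $\Omega_t\setminus B_R$ excess is controlled by $\hat C_3$ exactly as in the paper, while the $B_R\setminus\Omega_t$ part is driven to infinity by monotone convergence applied to $\int_{\{|\xi_1|>m(t_k)/R\}}\int_{m(t_k)/|\xi_1|}^R$. Your argument is more elementary---no Blaschke selection, no Fatou---and makes the quantitative role of the slab inclusion $\Omega_{t_k}\subset\{|y_1|\le m(t_k)\}$ uniformly transparent; the paper's compactness step is a bit more robust in that it uses only the degeneration of $L_k$ to a hyperplane rather than the explicit profile $m(t_k)/|\xi_1|$, but that extra generality is not needed here.
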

\begin{proof}
Case (\rmnum{3}): Recall (\ref{3.2}) and let $R=1$.
We begin with the estimate about $\int_{\Om_t\backslash B_1}G(y)^{\frac{n}{\beta}}dy$ first. Let $S=\{\xi|\rho\xi\in \Om_t\backslash B_1\}$, i.e. $S=\{\xi\in\mS^n|\rho(\xi)>1\}$. Suppose $m<1$. Since $G$ and $\varphi$ are even, by the origin-symmetry of $M_0$, $M_t$ is origin-symmetric. Thus due to convexity and Lemma \ref{l2.1},
$$\rho\le \frac{m}{|\la\xi,e_1\ra|}.$$
Then
\begin{equation}\label{3.7}
\begin{split}
\int_{\Om_t\backslash B_1}G(y)^{\frac{n}{\beta}}dy=&\int_S\int_1^\rho G(y)^{\frac{n}{\beta}}r^ndrd\xi\\
\le&\int_S\int_1^{\frac{m}{|\la\xi,e_1\ra|}} G(y)^{\frac{n}{\beta}}r^ndrd\xi\\
\le&\int_{S}\int_1^{\frac{1}{|\la\xi,e_1\ra|}} G(y)^{\frac{n}{\beta}}r^ndrd\xi\\
\le&\int_{\mS^n}\int_1^{\frac{1}{|\la\xi,e_1\ra|}} G(y)^{\frac{n}{\beta}}r^ndrd\xi\\
\le&\hat{C_3},
\end{split}
\end{equation}
by $\int_1^{\frac{1}{|\la\xi,e_1\ra|}} G(y)^{\frac{n}{\beta}}r^ndr>0$ and the conditions in the case (\rmnum{3}) of Theorem \ref{t1.2}. By Lemma \ref{l2.1} and (\ref{3.7}), we have
\begin{equation*}
\begin{split}
C\ge V(t)=\int_{B_{1}\backslash\Om_t}G(y)^{\frac{n}{\beta}}dy-\int_{\Om_t\backslash B_1}G(y)^{\frac{n}{\beta}}dy\ge\int_{B_{1}\backslash \Om_t}G(y)^{\frac{n}{\beta}}dy-\hat{C_3}.
\end{split}
\end{equation*}
In other words,
\begin{equation}\label{3.8}
\int_{B_{1}\backslash \Om_t}G(y)^{\frac{n}{\beta}}dy\le C.
\end{equation}
Next,  We shall estimate $\int_{B_1\backslash \Om_t}G(y)^{\frac{n}{\beta}}dy$. 
\begin{equation}\label{3.5}
	\begin{split}
\int_{B_1\backslash \Om_t}G(y)^{\frac{n}{\beta}}dy=\int_{B_1}G(y)^{\frac{n}{\beta}}\chi_{B_1\backslash \Om_t}(y) dy,
	\end{split}
\end{equation}
where $\chi$ denotes the characteristic function. Suppose to the contrary that there exists a sequence of times $t_k$ such that
$$\min_{x\in\mS^n}u(x,t_k)\rightarrow0^+ \text{  as  }k\rightarrow\infty.$$
Let $L_k=B_1\cap \Om_{t_k}$. Then each $L_k\subset B_1$ is an origin-symmetric convex body. By the Blaschke selection theorem, $\{L_k\}$ has a subsequence which converges to a nonempty, compact, convex set $L$. Without loss of generality, we assume
$$L_k\rightarrow L \text{  as  }k\rightarrow\infty,$$
where $L\subset B_1$ is an origin-symmetric convex subset. Then
\begin{align*}
	\min_{\mS^n}u_L=&\lim_{k\rightarrow\infty}\min_{\mS^n}u_{L_k}\\
	\le&\lim_{k\rightarrow\infty}\min_{\mS^n}u(\cdot,t_k)\\
	=&0,
\end{align*}
which together with that $L$ is origin-symmetric implies that $L$ is contained in a lower dimension subspace, i.e. a hyperplane in $\mR^{n+1}$. Then
$$\chi_L=0 \quad \text{ a.e. in }B_1.$$
Thus, for a.e. $y\in B_1$, there is 
\begin{align*}
	\lim_{k\rightarrow\infty}\chi_{B_1\backslash \Om_{t_k}}(y)=&\lim_{k\rightarrow\infty}[1-\chi_{L_k}(y)]\\
	=&1-\chi_L(y)\\
	=&1.
\end{align*} 
By Lemma \ref{l3.1}, (\ref{3.8}), (\ref{3.5}) and Fatou lemma,
\begin{equation}\label{3.6}
	\begin{split}
		C\ge&{\lim\inf}_{k\rightarrow\infty}\int_{B_1}G(y)^{\frac{n}{\beta}}\chi_{B_1\backslash \Om_{t_k}}(y) dy\\
		\ge&\int_{B_1}{\lim\inf}_{k\rightarrow\infty}G(y)^{\frac{n}{\beta}}\chi_{B_1\backslash \Om_{t_k}}(y) dy\\
		=&\int_{B_1}G(y)^{\frac{n}{\beta}}dy\\
		=&+\infty,
	\end{split}
\end{equation}
which is a contradiction. Thus we derive the lower bound of $u$.

Case (\rmnum{4}): Suppose $m<1$. Similar to case (\rmnum{3}), we have
$$\rho\le \frac{m}{|\la\xi,e_1\ra|}\le\frac{1}{|\la\xi,e_1\ra|}.$$
Since $\int_{B_1}G(y)^{\frac{n}{\beta}}dy<\infty$, recall (\ref{3.3}) and our choice of $M_0$ in case (\rmnum{4}) of Theorem \ref{t1.2}, we have
\begin{equation}\label{3.9}
\begin{split}
\hat{C_4}<V(0)\le V(t)=\int_{\mS^n}\int_0^\rho G(r\xi)^{\frac{n}{\beta}}r^ndrd\xi\le\int_{\mS^n}\int_0^\frac{1}{|\la\xi,e_1\ra|} G(r\xi)^{\frac{n}{\beta}}r^ndrd\xi\le\hat{C_4},
\end{split}
\end{equation}
which is a contradiction. Thus we derive the lower bound of $u$. Since we shall use (\ref{3.9}) in the rest paper, we only use $V(0)\le V(t)$ in (\ref{3.9}) although $V(0)=V(t)$ here.
\end{proof}
If we have the lower bound of $u$ we can directly derive the upper bound of $u$ under flow (\ref{1.1}).
\begin{lemma}\label{l3.3}
Let $M_0$ be a closed, smooth, origin-symmetric, uniformly convex hypersurface in $\mathbb{R}^{n+1}$, $n\ge2$, enclosing the origin. Suppose $\varphi:\mS^n\times(0,+\infty)\rightarrow(0,+\infty)$ and $G:\mR^{n+1}\backslash\{0\}\rightarrow(0,+\infty)$ are two smooth and even functions. If $u(\cdot,t)\ge\frac{1}{C_1},$  along flow (\ref{1.1}), there exists a positive constant $C_2$ independent of $t$, such that
	$$u(x,t)\le  C_{2}.$$
		It also means that
	$$\rho,|Du|\le C_{2}$$
	by Lemma \ref{l2.1}.
\end{lemma}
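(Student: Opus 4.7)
The plan is to pass to the polar body $\Omega_t^*$ and apply a dual version of Lemma \ref{l3.2}. By Lemma \ref{l2.1}, the hypothesis $u \ge 1/C_1$ gives $\rho \ge 1/C_1$, so $\Omega_t \supset B_{1/C_1}$; by evenness of $\varphi$, $G$ and $M_0$, the polar body $\Omega_t^*$ is origin-symmetric, and its support function $u^* = 1/\rho \le C_1$, so $\Omega_t^* \subset B_{C_1}$. The desired upper bound $u \le C_2$ is equivalent, since $\rho^* = 1/u$, to a positive lower bound on $\rho^*$ over $\mS^n$, which by Lemma \ref{l2.1} applied to $\Omega_t^*$ reduces to a lower bound on $u^*$.

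The key observation, emphasized in the introduction, is that $Y = \nu/u$ is the position vector of $\Omega_t^*$, so that $\varphi(\nu,u) = \varphi(Y)$ plays the role of a ``$G$''-type factor on $\Omega_t^*$ while $G(X) = G(\rho\xi)$ plays the role of a ``$\varphi$''-type factor. Under this identification flow (\ref{1.1}) is a flow of the same form on $\Omega_t^*$ with $\varphi$ and $G$ interchanged; consequently the conservation of $V(t)$ from Lemma \ref{l3.1} translates into a conservation law for the dual volume-type quantity on $\Omega_t^*$ (morally $\int_{\Omega_t^*}\varphi^{-n/\beta}\,dy$ after the appropriate relabelling), and similarly for the monotone functional $J(t)$. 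With these dual conservation/monotonicity principles in hand, the Blaschke-selection argument of Lemma \ref{l3.2} can be quoted verbatim on $\Omega_t^*$ with $\varphi$ and $G$ swapped, yielding the required lower bound on $u^*$, and hence the upper bound $u \le C_2$. The bounds on $\rho$ and $|Du|$ then follow from Lemma \ref{l2.1} together with the identity $\rho^2 = u^2 + |Du|^2$.

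The main obstacle will be to set up the duality rigorously: to verify that the evolution of $u^* = 1/\rho$ deduced from the flow of $u$ and the identity (\ref{2.4}) really fits into the framework of (\ref{1.1}) on $\Omega_t^*$ after the relabelling, and to check that the conserved $V(t)$ indeed becomes the claimed dual quantity under this dictionary. Once this correspondence is in place, the proof becomes a transcription of Lemma \ref{l3.2} with the roles of $\varphi,G$ and of the body and its polar exchanged.
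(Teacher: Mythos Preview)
Your duality idea is the right one and matches the paper's Case (1), but the proposal has two genuine gaps.

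First, the paper does \emph{not} attempt to show that $\Omega_t^*$ evolves by a flow of the form (\ref{1.1}) with $\varphi$ and $G$ interchanged; that statement is delicate and is never used. What is actually done is more elementary: one works with the monotone functional $J(t)$ on $\Omega_t$ itself and rewrites it, via the substitution $s=1/r$ and the identity $u=1/\rho^*$, as an integral over the dual body. Concretely, with $\widetilde G(\xi^*,r):=\varphi^{-n/\beta}(\xi^*,1/r)\,r^{-n-2}$ one finds
\[
J(t)=\int_{\mS^n}\!\int_{1/C_1}^{u}\varphi^{-n/\beta}(x,s)\,ds\,dx
=\int_{\mS^n}\!\int_{\rho^*}^{C_1}\widetilde G(\xi^*,r)\,r^n\,dr\,d\xi^*
=\int_{B_{C_1}\setminus\Omega_t^*}\widetilde G(y)\,dy,
\]
so it is $J$ (built from $\varphi$), not $V$, that becomes the dual ``volume'' quantity. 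No dual flow equation is needed: the bound $J(t)\le C$ comes directly from the monotonicity computation of Lemma \ref{l3.1} on $\Omega_t$. You should reorganize the argument around this change of variables rather than the obstacle you flagged.

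Second, and more seriously, the Blaschke--selection/Fatou step from Lemma \ref{l3.2} case (\rmnum{3}) requires $\int_{B_{C_1}}\widetilde G(y)\,dy=+\infty$, which is exactly the condition $\int_1^\infty\!\int_{\mS^n}\varphi^{-n/\beta}(x,s)\,dx\,ds=+\infty$. When this integral is \emph{finite} your argument gives no contradiction, and the hypotheses of Lemma \ref{l3.2} case (\rmnum{4}) are not available for $\widetilde G$ either. The paper therefore splits into two cases. In the finite case one instead sets $J(t)=\int_{\mS^n}\!\int_u^\infty\varphi^{-n/\beta}\,ds\,dx$, uses the reverse monotonicity $J(t)\ge J(0)>0$, and argues directly on $\Omega_t$: writing $\phi(x,u)=\int_u^\infty\varphi^{-n/\beta}\,ds$, one splits $\mS^n$ into $\{|x_{n+1}|<\delta\}$ and its complement, uses $u\ge M|x_{n+1}|$ on the latter, and lets $M\to\infty$ to force $\phi(x,M\delta)\to 0$, contradicting the lower bound on $J$. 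Without this second branch your proof is incomplete.
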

\begin{proof}
We shall get a dual relation between $\varphi(x,u)$ and $G(\xi,\rho)$ in this proof. The definition of the dual body and some properties can be seen in (\ref{3.10}) and near (\ref{3.10}).

We divide this proof into two cases. 

(1) If $\int_1^\infty\int_{\mS^n}\varphi(x,s)^{-\frac{n}{\beta}}dxds=+\infty$. Let $$J(t)=\int_{\mS^n}\int_\frac{1}{C_1}^u\varphi^{-\frac{n}{\beta}}(x,s)dsdx.$$
By the calculation of the proof of Lemma \ref{l3.1}, we have $J(t)\le C$. Similar to (\ref{3.10}), we have 
$$\rho^*(x,t)=\frac{1}{u(x,t)}.$$
Let $s=\frac{1}{r}$, we have
\begin{equation}\label{3.11}
\begin{split}
C\ge J(t)=&\int_{\mS^n}\int_\frac{1}{C_1}^\frac{1}{\rho^*(x,t)}\varphi^{-\frac{n}{\beta}}(x,s)dsdx\\
=&\int_{\mS^n}\int_{\rho^*(x,t)}^{C_1}\varphi^{-\frac{n}{\beta}}(x,\frac{1}{r})r^{-2}drdx\\
=&\int_{\mS^n}\int_{\rho^*(\xi^*,t)}^{C_1}\varphi^{-\frac{n}{\beta}}(\xi^*,\frac{1}{r})r^{-2}drd\xi^*.
\end{split}
\end{equation}
Define $\widetilde G(\xi^*,r)=\varphi^{-\frac{n}{\beta}}(\xi^*,\frac{1}{r})r^{-n-2}$. By $\rho^*\le C_1$, (\ref{3.11}) implies
\begin{equation}\label{3.12}
\int_{B_{C_1}\backslash \Om_t^*}\widetilde G(y)dy=\int_{\mS^n}\int_{\rho^*(\xi^*,t)}^{C_1}\widetilde G(\xi^*,r)r^ndrd\xi^*\le C.
\end{equation}
Similarly,
\begin{equation*}
\begin{split}
+\infty=\int_1^\infty\int_{\mS^n}\varphi^{-\frac{n}{\beta}}(x,s)dxds
=&\int_0^1\int_{\mS^n}\varphi^{-\frac{n}{\beta}}(x,\frac{1}{r})r^{-2}dxdr\\
=&\int_0^1\int_{\mS^n}\varphi^{-\frac{n}{\beta}}(\xi^*,\frac{1}{r})r^{-2}d\xi^*dr\\
=&\int_0^1\int_{\mS^n}\widetilde G(\xi^*,r)r^nd\xi^*dr.
\end{split}
\end{equation*}
For fixed positive constant $C_1$, the above equation means
\begin{equation}\label{3.13}
\int_0^{C_1}\int_{\mS^n}\widetilde G(\xi^*,r)r^nd\xi^*dr=+\infty.
\end{equation}
Recalling the proof of the case (\rmnum{3}) of Lemma \ref{l3.2}, we can find that (\ref{3.12}), (\ref{3.13}) and  (\ref{3.8}), (\ref{3.6}) have the same meaning respectively. By the proof of the case (\rmnum{3}) of Lemma \ref{l3.2}, we derive the lower bound of $u^*$, i.e. we derive the upper bound of $u$.

(2) If $\int_1^\infty\int_{\mS^n}\varphi(x,s)^{-\frac{n}{\beta}}dxds<+\infty$.  Let 
$$J(t)=\int_{\mS^n}\int_u^\infty\varphi(x,s)^{-\frac{n}{\beta}}dsdx.$$
By the calculation of Lemma \ref{l3.1}, $J(t)$ is non-decreasing, i.e. $J(t)\ge J(0)$.

Denote $\phi(x,u)=\int_u^\infty\varphi(x,s)^{-\frac{n}{\beta}}ds$. Let $\delta$ be a small positive constant satisfying the following inequality:
\begin{equation}\label{3.17}
\vert\{x\in\mS^n:|x_{n+1}|<\delta\}|\le\frac{J(0)}{2\max_{\mS^n}\phi(x,\frac{1}{C_1})}.
\end{equation}
Due to convexity and Lemma \ref{l2.1},
$$u\ge M|x_{n+1}|.$$
Denoting $S_\delta=\{x\in\mS^n:|x_{n+1}|<\delta\}$ and noting that $\phi(x,u)$ is decreasing with $u$, we have
\begin{equation}\label{3.18}
	\begin{split}
J(0)\le& J(t)\\
=&\int_{\mS^n\backslash S_\delta}\phi(x,u)dx+\int_{S_\delta}\phi(x,u)dx\\
\le&\int_{\mS^n\backslash S_\delta}\phi(x,M|x_{n+1}|)dx+\int_{S_\delta}\phi(x,\frac{1}{C_1})dx\\
\le&\int_{\mS^n\backslash S_\delta}\phi(x,M\delta)dx+\max_{x\in\mS^n}\phi(x,\frac{1}{C_1})|S_\delta|\\
\le&\max_{x\in\mS^n}\phi(x,M\delta)|S^n|+\max_{x\in\mS^n}\phi(x,\frac{1}{C_1})|S_\delta|.
	\end{split}
\end{equation}
Inserting (\ref{3.17}) into (\ref{3.18}), we obtain
\begin{align*}
\frac{J(0)}{2}\le \max_{x\in\mS^n}\phi(x,M\delta)|S^n|,
\end{align*}
which implies that $\max_{x\in\mS^n}\phi(x,M\delta)$ has a uniformly positive lower bound. By its definition, $\phi(x,s)$ is decreasing and tends to $0^+$ as $s\rightarrow+\infty$. Thus $M$ is uniformly bounded from above. Thus we derive the upper bound of $u$.
\end{proof}
\textbf{Remark:} Case (2) in this proof can be also proved by the dual relation. If $\int_{B_1}G(y)dy<\infty$, the dual body satisfies $\int_1^\infty\int_{\mS^n}\widetilde\varphi(x,s)^{-\frac{n}{\beta}}dxds<+\infty$, where the definition of $\widetilde\varphi(x,s)$ can be seen in the proof of case (\rmnum{6}) in Lemma \ref{l3.6}. Thus by the dual relation and the proof of Lemma \ref{l3.5} (2), we can prove case (2).

Next we give the $C^0$ estimates for cases (\rmnum{1}) and (\rmnum{2}) of Theorem \ref{t1.2}.
\begin{lemma}\label{l3.4}
	Under flow (\ref{1.1}), the corresponding assumptions of Theorem \ref{t1.2} and the cases (\rmnum{1}) and (\rmnum{2}), there exists a positive constant $C_3$ independent of $t$, such that
	$$u(x,t)\le  C_{3}.$$
It also means that
$$\rho,|Du|\le C_{3}$$
by Lemma \ref{l2.1}.
\end{lemma}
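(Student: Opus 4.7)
My plan is to mirror the dual-body reduction used in the proof of Lemma \ref{l3.3} case (1), so as to transfer cases (\rmnum{1}) and (\rmnum{2}) back to the already-established cases (\rmnum{3}) and (\rmnum{4}) of Lemma \ref{l3.2}, but applied to the polar body $\Om_t^*$ rather than to $\Om_t$. The two key identifications are $\rho^*(x,t)=1/u(x,t)$ and the substitution $s=1/r$, which turns $\varphi^{-n/\beta}(\xi,s)$ into $\widetilde G(\xi,r)r^n$ where
\begin{equation*}
\widetilde G(\xi,r):=\varphi^{-n/\beta}(\xi,1/r)\,r^{-n-2}.
\end{equation*}
Since $M_0$ is origin-symmetric and $\varphi,G$ are even, flow (\ref{1.1}) preserves origin-symmetry, so $\Om_t^*$ is also origin-symmetric. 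Under these substitutions, the hypotheses of case (\rmnum{1}) become $\int_{B_1}\widetilde G(y)dy=+\infty$ and $\int_{\mS^n}\int_1^{1/|\la\xi,\theta\ra|}\widetilde G(\xi,r)r^n drd\xi\le\hat C_1$, which are precisely those of case (\rmnum{3}); a parallel computation sends case (\rmnum{2}) to case (\rmnum{4}), and converts the extra initial requirement on $M_0$ into $\int_{\Om_0^*}\widetilde G\,dy>\hat C_2$, the analogue of $V(0)>\hat C_4$.

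For case (\rmnum{1}), take $C_0=1$ so that $J(0)<\infty$ by smoothness of $u_0$. Lemma \ref{l3.1} gives $J(t)\le J(0)=:C$, and the substitution yields
\begin{equation*}
J(t)=\int_{B_1\backslash \Om_t^*}\widetilde G(y)\,dy-\int_{\Om_t^*\backslash B_1}\widetilde G(y)\,dy,
\end{equation*}
which is exactly the form of $V(t)$ in case (\rmnum{3}) of Lemma \ref{l3.2}. Assuming for contradiction that $M(t_k):=\max u(\cdot,t_k)\to\infty$, we have $m_k^*:=\min\rho^*_{\Om_{t_k}^*}=1/M(t_k)\to 0$. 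Origin-symmetry and (\ref{2.9}) give $\rho^*\le m_k^*/|\la\xi,e_k\ra|$ for the direction $e_k$ realizing the minimum, so on $\{\rho^*>1\}$ one has $|\la\xi,e_k\ra|<m_k^*<1$, and
\begin{equation*}
\int_{\Om_{t_k}^*\backslash B_1}\widetilde G\le\int_{\mS^n}\int_1^{1/|\la\xi,e_k\ra|}\widetilde G(\xi,r)r^n drd\xi\le\hat C_1.
\end{equation*}
Hence $\int_{B_1\backslash \Om_{t_k}^*}\widetilde G\le C+\hat C_1$. The Blaschke selection theorem forces any subsequential limit of the origin-symmetric convex bodies $B_1\cap \Om_{t_k}^*$ to lie in a hyperplane, so $\chi_{B_1\backslash \Om_{t_k}^*}\to 1$ a.e.\ on $B_1$; Fatou's lemma would then give $\int_{B_1}\widetilde G\le C+\hat C_1<\infty$, contradicting $\int_{B_1}\widetilde G=+\infty$.

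Case (\rmnum{2}) is handled by the same duality, now matching case (\rmnum{4}). With $C_0=1$ and $J(t)\ge J(0)$ from Lemma \ref{l3.1}, the substitution yields $J(t)=\int_{\Om_t^*}\widetilde G\,dy-\int_{B_1}\widetilde G\,dy$. Since $\int_{B_1}\widetilde G<\infty$ (the dual of the finiteness hypothesis in case (\rmnum{2})), the quantity $\int_{\Om_t^*}\widetilde G$ is non-decreasing and bounded below by $\int_{\Om_0^*}\widetilde G>\hat C_2$. If $M(t_k)\to\infty$, then origin-symmetry and convexity give $\rho^*\le 1/|\la\xi,e_k\ra|$, and the transformed integral condition forces
\begin{equation*}
\int_{\Om_{t_k}^*}\widetilde G\le\int_{\mS^n}\int_0^{1/|\la\xi,e_k\ra|}\widetilde G(\xi,r)r^n drd\xi\le\hat C_2,
\end{equation*}
contradicting the strict lower bound just obtained.

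The main obstacle is the bookkeeping of the polar transformation --- correctly matching each integrand, each region of integration, and the direction of each inequality between the primal statement (involving $u,\varphi$) and the dual statement (involving $\rho^*,\widetilde G$) --- together with verifying that origin-symmetry is preserved by the flow on $\Om_t^*$; once this dualisation is set up, the remaining analytic arguments are essentially verbatim copies of the proofs of cases (\rmnum{3}) and (\rmnum{4}) of Lemma \ref{l3.2}.
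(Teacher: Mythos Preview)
Your proposal is correct and follows essentially the same approach as the paper: both introduce the polar body $\Om_t^*$, use the identification $\rho^*=1/u$ together with the substitution $s=1/r$ and $\widetilde G(\xi,r)=\varphi^{-n/\beta}(\xi,1/r)r^{-n-2}$ to translate the hypotheses of cases (\rmnum{1}) and (\rmnum{2}) into exactly the hypotheses of cases (\rmnum{3}) and (\rmnum{4}) for $\Om_t^*$, and then invoke the arguments of Lemma~\ref{l3.2} verbatim. The paper only writes out case (\rmnum{1}) and says case (\rmnum{2}) is similar, whereas you spell out both; in particular your observation that the extra initial requirement on $M_0$ in case (\rmnum{2}) becomes precisely $\int_{\Om_0^*}\widetilde G>\hat C_2$, the dual analogue of the condition $V(0)>\hat C_4$ used in (\ref{3.9}), is the correct bookkeeping.
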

\begin{proof}
We use the method in the proof of Lemma \ref{l3.3}. We can find that the cases (\rmnum{1}) and (\rmnum{2}) correspond to the cases (\rmnum{3}) and (\rmnum{4}) respectively. We only prove the case (\rmnum{1}) since the proof of case (\rmnum{2}) is similar to case (\rmnum{1}). Let 
$$J(t)=\int_{\mS^n}\int_1^u\varphi(x,s)^{-\frac{n}{\beta}}dsdx.$$
By the calculation of Lemma \ref{l3.1}, $J(t)$ is non-increasing, i.e. $J(t)\le J(0)$. Similar to above, by the conditions of case (\rmnum{1}), we have
\begin{align*}
\int_{\mS^n}\int^{1}_{\rho^*}\widetilde G(\xi^*,r)r^ndrd\xi^*=\int_{\mS^n}\int^{u}_{1}\varphi^{-\frac{n}{\beta}}(x,s)dsdx\le J(0),\\
	\int_{B_1}\widetilde G(y)dy=\int_{0}^1\int_{\mS^n}\widetilde G(\xi^*,r)r^nd\xi^*dr=\int_{1}^\infty\int_{\mS^n}\varphi^{-\frac{n}{\beta}}(x,s)dxds=+\infty,\\
\int_{\mS^n}\int^\frac{1}{|\la\xi^*,\theta\ra|}_1\widetilde G(\xi^*,r)r^ndrd\xi^*=\int_{\mS^n}\int_{|\la x,\theta\ra|}^1\varphi^{-\frac{n}{\beta}}(x,s)dsdx\le\hat{C}_1,
\end{align*}
where $\widetilde G(\xi^*,r)=\widetilde G(x,r)=\varphi^{-\frac{n}{\beta}}(x,\frac{1}{r})r^{-n-2}$ and $r=\frac{1}{s}$. Repeat the proof of case (\rmnum{3}) in Lemma \ref{l3.2} to the dual body $\Om_t^*$. Then we derive the lower bound of $\rho^*$. Thus we derive the upper bound of $u$.
\end{proof}
We can also derive the lower bound of $u$ along flow (\ref{1.1}) if we have the upper bound of $u$.
\begin{lemma}\label{l3.5}
Let $M_0$ be a closed, smooth, origin-symmetric, uniformly convex hypersurface in $\mathbb{R}^{n+1}$, $n\ge2$, enclosing the origin. Suppose $\varphi:\mS^n\times(0,+\infty)\rightarrow(0,+\infty)$ and $G:\mR^{n+1}\backslash\{0\}\rightarrow(0,+\infty)$ are two smooth and even functions. If $u(\cdot,t)\le C_3,$  along flow (\ref{1.1}),
there exists a positive constant $C_4$ independent of $t$, such that
$$u(x,t)\ge  C_{4}.$$
It also means that
$$\rho\ge  C_{4}$$
by Lemma \ref{l2.1}.
\end{lemma}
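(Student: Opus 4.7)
The strategy is to mirror the argument of Lemma \ref{l3.3} via the dual relation (\ref{3.10}): given $u(\cdot,t) \le C_3$, by Lemma \ref{l2.1} we also have $\rho(\cdot,t) \le C_3$, so $\Om_t \subset B_{C_3}$, and equivalently $\rho_{\Om_t^*} = 1/u \ge 1/C_3$. We aim to show a uniform lower bound $\rho \ge C_4$, which by Lemma \ref{l2.1} will give $u \ge C_4$. We split into two cases according to the behavior of $G$ near the origin.

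\textbf{Case (1):} $\int_{B_1} G(y)^{n/\beta}\, dy = +\infty$. We mimic the proof of Lemma \ref{l3.2} (case (iii)). Since $\Om_t \subset B_{C_3}$, using the appropriate form of the conserved quantity $V(t)$ from Lemma \ref{l3.1} (with $R = C_3$, so that $\Om_t \setminus B_R = \emptyset$), we obtain a uniform bound on $\int_{B_{C_3} \setminus \Om_t} G^{n/\beta}\, dy$. Suppose toward contradiction that $\min u(\cdot, t_k) \to 0^+$ along some sequence. Since each $\Om_{t_k} \subset B_{C_3}$ is origin-symmetric, the Blaschke selection theorem yields a subsequence $\Om_{t_k} \to L$ in the Hausdorff sense, where $L$ is an origin-symmetric convex set with $\min u_L = 0$; being origin-symmetric, $L$ must be contained in a hyperplane. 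Therefore $\chi_{B_{C_3} \setminus \Om_{t_k}} \to 1$ almost everywhere in $B_{C_3}$, and Fatou's lemma forces $\int_{B_{C_3}} G^{n/\beta}\, dy < +\infty$, contradicting $\int_{B_1} G^{n/\beta}\, dy = +\infty$.

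\textbf{Case (2):} $\int_{B_1} G(y)^{n/\beta}\, dy < +\infty$. This is handled by a direct monotone argument (and is the case referenced in the remark after Lemma \ref{l3.3}). By Lemma \ref{l3.1}, $V(t)$ in the form $\int_{\Om_t} G^{n/\beta}\, dy$ (or its cancellation counterpart relative to a suitable $R$) is positive and conserved, say $V(t) = V(0) > 0$. Suppose $m_k := \min u(\cdot, t_k) \to 0^+$, attained at $e_1$. By origin-symmetry and Lemma \ref{l2.1}, $\rho(\xi, t_k) \le m_k / |\langle \xi, e_1 \rangle|$. Split $\mS^n = S_\delta \cup (\mS^n \setminus S_\delta)$ with $S_\delta = \{|\langle \xi, e_1 \rangle| < \delta\}$. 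On $\mS^n \setminus S_\delta$ one has $\rho \le m_k/\delta$, and the corresponding contribution to $V(t_k)$ is dominated by $\int_{B_{m_k/\delta}} G^{n/\beta}\, dy \to 0$ as $m_k \to 0$ (for fixed $\delta$), using $\int_{B_1} G^{n/\beta}\, dy < +\infty$. On $S_\delta$, $\rho \le C_3$ and the contribution tends to $0$ as $\delta \to 0$ by absolute continuity of the integral on $B_{C_3}$, since $\int_{B_{C_3}} G^{n/\beta}\, dy < +\infty$ ($G$ is smooth on $\mR^{n+1} \setminus \{0\}$, hence bounded on the compact annulus $\overline{B_{C_3}} \setminus B_1$). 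Choosing first $\delta$ small and then $k$ large gives $V(t_k) < V(0)$, a contradiction.

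The main obstacle is verifying the correct sign and form of the conserved quantity $V(t)$ in each case, together with a careful order-of-limits argument in Case (2) (fix $\delta$, then let $k \to \infty$). An equivalent route is to apply Lemma \ref{l3.3} directly to the dual body $\Om_t^*$ using the change of variable $s = 1/r$ from (\ref{3.11})--(\ref{3.13}), which translates the hypothesis on $G$ into the hypothesis on the dual $\widetilde\varphi$ required by Lemma \ref{l3.3}; this is the approach alluded to in the remark preceding the present lemma.
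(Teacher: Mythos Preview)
Your Case (1) is exactly the paper's argument: use the conserved $V(t)=\int_{B_{C_3}\setminus\Om_t}G^{n/\beta}$ (which is well-defined since $\Om_t\subset B_{C_3}$), Blaschke selection, and Fatou to reach a contradiction with $\int_{B_1}G^{n/\beta}=+\infty$.

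Your Case (2) is correct but takes a genuinely different route. The paper argues as follows: since $\int_{B_{C_3}}G^{n/\beta}<\infty$, absolute continuity gives a $\delta>0$ with $\int_A G^{n/\beta}<V(0)$ whenever $|A|<\delta$; hence $\mathrm{Vol}(\Om_t)\ge\delta$ for all $t$. Then the origin-symmetric body $\Om_t$ sits in the box $[-\rho_{\max},\rho_{\max}]^n\times[-\rho_{\min},\rho_{\min}]$, so $\delta\le 2^{n+1}\rho_{\max}^n\rho_{\min}\le 2^{n+1}C_3^n\rho_{\min}$, giving an explicit lower bound in one line. Your argument instead splits $V(t_k)$ over $S_\delta$ and $\mS^n\setminus S_\delta$ and drives both pieces to zero. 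This works, but note that $S_\delta$ depends on the (varying) minimizing direction $e_1(t_k)$; you need the observation that $|S_\delta|$ is independent of $e_1$, so absolute continuity of $\xi\mapsto\int_0^{C_3}G(r\xi)^{n/\beta}r^n\,dr$ in $L^1(\mS^n)$ lets you choose $\delta$ uniformly in $k$ before sending $k\to\infty$. With that clarification your double-limit argument is sound. The paper's approach is shorter and yields an explicit constant; yours stays closer to the integral and avoids the auxiliary volume step.
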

\begin{proof}
We divide this proof into two cases.

(1) If $\int_{B_1}G(y)^\frac{n}{\beta}dy=\infty$. Recall the proof of case (\rmnum{3}) in Lemma \ref{l3.2}. At this time we let
$$V(t)=\int_{B_{C_3}\backslash \Om_t}G(y)^{\frac{n}{\beta}}dy,$$
which is only used in this case.
Since $u\le C_3,$ we have $\Om_t\subset B_{C_3}$, where $B_{C_3}$ denotes the ball centered at the origin with radius $C_3$. The calculation in Lemma \ref{l3.1} can be used to this situation, i.e.
$$\int_{B_{C_3}\backslash \Om_t}G(y)^{\frac{n}{\beta}}dy=\int_{B_{C_3}\backslash \Om_0}G(y)^{\frac{n}{\beta}}dy=C.$$
 Replace $B_1$ in the proof of case (\rmnum{3}) in Lemma \ref{l3.2} with $B_{C_3}$ then we can get the lower bound of $u$ by (\ref{3.6}).

(2) If $\int_{B_1}G(y)^\frac{n}{\beta}dy<\infty$. At this time we let
$$V(t)=\int_{\Om_t}G(y)^{\frac{n}{\beta}}dy.$$ 
Since $u\le C_3$, there is $\Om_t\subset B_{C_3}.$
 $\int_{B_1}G(y)dy<\infty$ implies that there exists a $\delta>0$ such that
$$\int_AG(y)dy<V(0) \text{ for every measurable set } A\subset B_{C_3} \text{ with }|A|<\delta.$$
By Lemma \ref{l3.1}, we have $\int_{\Om_t}G(y)dy=V(0)$, i.e. Vol$(\Om_t)\ge\delta$. By rotation, we can assume $\rho(e_{1},t)=\rho_{\min}(t)$. Since $\Om_t$ is origin-symmetric, we find that $\Om_t$ is contained in the cube
$$Q_t=\{z\in\mR^{n+1}:-\rho_{\max}\le z_i\le\rho_{\max} \text{ for }2\le i\le n+1,-\rho_{\min}\le z_{1}\le\rho_{\min}\}.$$
Therefore,
\begin{equation}\label{3.16}
	\delta\le\text{Vol}(\Om_t)\le 2^{n+1}(\rho_{\max})^n\rho_{\min}.
\end{equation}
By $\rho\le C_3$ we derive the lower bound of $\rho$.

In  summary we have completed this proof.
\end{proof}
\textbf{Remark:} Case (2) in this proof can be also proved by the dual relation. If $\int_{B_1}G(y)^\frac{n}{\beta}dy<\infty$, the dual body satisfies $\int_1^\infty\int_{\mS^n}\widetilde\varphi(x,s)^{-\frac{n}{\beta}}dxds<+\infty$. Thus by the dual relation and the proof of Lemma \ref{l3.3} (2), we can prove case (2).

 We next show the $C^0$ estimates with the cases (\rmnum{5}) and (\rmnum{6}) of Theorem \ref{t1.2}.
\begin{lemma}\label{l3.6}
	Under flow (\ref{1.1}), the corresponding assumptions of Theorem \ref{t1.2} and the cases (\rmnum{5}) and (\rmnum{6}), there exists a positive constant $C_5$ independent of $t$, such that
	$$\frac{1}{C_5}\le u(x,t)\le  C_{5}.$$
	It also means that
	$$\frac{1}{C_5}\le \rho\le C_{5},\quad |Du|\le C_{5}$$
	by Lemma \ref{l2.1}.
\end{lemma}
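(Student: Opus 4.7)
My plan is to establish a one-sided $C^0$ bound in each case via the monotone quantities of Lemma \ref{l3.1} and the Blaschke--Santal\'o inequality (Lemma \ref{l2.2}), then invoke Lemma \ref{l3.3} or Lemma \ref{l3.5} for the other side.

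For case (\rmnum{5}), since $V(t)=V(0)$ by Lemma \ref{l3.1}, the integrated hypothesis on $G$ gives
\begin{equation*}
\int_{\mS^n}\rho^q\,d\xi\ge V(0)/\hat C_5=:c_1>0
\end{equation*}
uniformly in $t$. Since $-q^*<p<0$, the exponent $s:=-p$ lies in $(0,q^*)$, and Lemma \ref{l2.2} applied to the origin-symmetric body $\Om_t$ (with $\rho_{\Om_t^*}=1/u$) yields
\begin{equation*}
\int_{\mS^n}u^{-s}\,d\sigma\le C_{n,q,s}^s\,c_1^{-s/q}=:C_4.
\end{equation*}
These two dual integral bounds, combined with the reverse convexity inequality (\ref{2.9}) applied on both $\Om_t$ and $\Om_t^*$, pin down $u_{\min}(t)$ away from zero: the argument parallels the one used in the $L^p$-dual Minkowski literature for the admissible window $-q^*<p<0<q$, and the limsup hypothesis on $\varphi$ in (\rmnum{5}) handles the borderline exponents. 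Once $u\ge c>0$ is in hand, Lemma \ref{l3.3}, Case (2), applies verbatim---its hypotheses coincide with those on $\varphi$ in (\rmnum{5})---and yields $u\le C_5$.

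Case (\rmnum{6}) is treated by the dual argument on the polar body $\Om_t^*$. Using $\rho_{\Om_t^*}=1/u$ and the substitution $s=1/r$, the hypotheses of (\rmnum{6}) translate into the analogue of (\rmnum{5}) for $\Om_t^*$, with $\widetilde G(\xi,r):=\varphi^{-n/\beta}(\xi,1/r)\,r^{-n-2}$ taking the role of $G^{n/\beta}$. Running the argument of case (\rmnum{5}) on $\Om_t^*$ produces the upper bound $u\le C_5$, after which Lemma \ref{l3.5}, Case (2), furnishes the lower bound via $\int_{B_1}G^{n/\beta}\,dy<+\infty$ from (\rmnum{6}).

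The main obstacle is deducing simultaneous two-sided bounds on $u_{\min}$ and $u_{\max}$ from the pair of integral inequalities $\int\rho^q\ge c_1$ and $\int u^{-s}\le C_4$ together with the reverse convexity (\ref{2.9}); the admissible window $-q^*<p<0<q$ is precisely what renders the two Blaschke--Santal\'o--type scalings incompatible with $u_{\min}\to 0$. The duality in case (\rmnum{6}) is otherwise routine once the transformation relating $\varphi$ and $G$ on $\Om_t$ versus on $\Om_t^*$ is written out.
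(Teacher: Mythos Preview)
Your case (\rmnum{5}) argument has a genuine gap at the step where you claim the two integral bounds ``pin down $u_{\min}(t)$ away from zero.'' The bound $\int_{\mS^n}u^{-s}\,d\sigma\le C_4$ obtained from Lemma \ref{l2.2} is not an independent constraint: it is a \emph{consequence} of $\int_{\mS^n}\rho^q\,d\xi\ge c_1$, so you are really working with the single inequality $\int\rho^q\ge c_1$. This does not prevent $u_{\min}\to 0$. By (\ref{2.9}) one has $\rho(\xi)\le\rho_{\min}/|\la\xi,\xi_{\min}\ra|$, so $\int\rho^q\le\rho_{\min}^q\int|\la\xi,\xi_{\min}\ra|^{-q}$, and the last integral diverges for every $q\ge 1$; thus for $q\ge 1$ a long thin origin-symmetric body can satisfy $\int\rho^q\ge c_1$ with $\rho_{\min}$ arbitrarily small. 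You also never actually use the limsup hypothesis on $\varphi$ despite mentioning it, and you never invoke the monotone functional $J(t)$ from Lemma \ref{l3.1}; both are essential.

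The paper's argument proceeds in the opposite order: it proves the \emph{upper} bound on $u$ first and then obtains the lower bound from Lemma \ref{l3.5}. The upper bound is where all the hypotheses of (\rmnum{5}) are consumed. One assumes $u_{\max}(t_j)\to\infty$, uses (\ref{2.8}) to show the sets $\{u<\delta\}$ and $\{\delta\le u<C_6\}$ have vanishing measure, and then estimates $J(t_j)=\int_{\mS^n}\int_u^\infty\varphi^{-n/\beta}\,ds\,dx$ in three pieces: on $\{u<\delta\}$ the limsup hypothesis gives $\int_u^\infty\varphi^{-n/\beta}\le C_7 u^{p}=C_7(\rho^*)^{-p}$, and H\"older together with Lemma \ref{l2.2} (fed by $\int\rho^q\ge c_1$ from the $G$ hypothesis and $V(t)=V(0)$) bounds $\int_{S_1}(\rho^*)^{-p}\le C|S_1|^{1+p/q'}\to 0$; the middle piece is $O(|S_2|)\to 0$; the tail piece is $<J(0)/2$ by choice of $C_6$. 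Since $J(t_j)\ge J(0)$ by Lemma \ref{l3.1}, this yields the contradiction $J(0)\le J(0)/2$. Your proposal for case (\rmnum{6}) via duality is the right idea and matches the paper, but it inherits the gap from case (\rmnum{5}).
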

\begin{proof}
Case (\rmnum{5}):
This proof is inspired by \cite{DL3}. First, we shall derive the upper bound of $u$. We follow an argument in \cite{CW}. Since $\lim_{\mu\rightarrow\infty}\int_{\mu}^\infty\int_{\mS^n}\varphi^{-\frac{n}{\beta}}(x,s)dxds=0$ by $\int_{\mR^{n+1}\backslash B_1}\varphi^{-\frac{n}{\beta}}(y)|y|^{-n}dy<+\infty$, We can choose $C_6$ such that
\begin{equation}\label{3.21}
\int_{\mS^n}\int_{C_6}^\infty\varphi^{-\frac{n}{\beta}}(x,s)dsdx<\frac{J(0)}{2},
\end{equation} 
where we let $J(t)=\int_{\mS^n}\int_{u}^\infty\varphi^{-\frac{n}{\beta}}(x,s)dsdx.$ Note that 
$${\lim\sup}_{s\rightarrow0^+}\frac{\int_{s}^\infty\varphi^{-\frac{n}{\beta}}(x,t)dt}{s^p}<\infty.$$
In other words, there exists $\delta>0$ such that
\begin{equation}\label{3.22}
\int_{s}^\infty\varphi^{-\frac{n}{\beta}}(x,t)dt\le C_7s^p
\end{equation}
for $\forall s<\delta$. Set $S_1(t)=\mS^n\cap\{u(t)<\delta\}$, $S_2(t)=\mS^n\cap\{\delta\le u(t)<C_6\}$ and $S_3(t)=\mS^n\cap\{u(t)\ge C_6\}$.
Suppose there is a sequence origin-symmetric convex body $\Om_{t_j}$ along this flow, and the diameter of $\Om_{t_j}$, $2M(t_j)\rightarrow\infty$ as $t_j\rightarrow T$. Since $\Om_{t_j}$ is origin-symmetric, we have $u_j(y)\ge M(t_j)|x_0\cdot y|$ for any $y\in\mS^n$, where $u_j$ attains the maximum at $x_0\in\mS^n$. We conclude that
\begin{equation}\label{3.23}
	\vert S_1(t_j)\vert\rightarrow0,\quad \vert S_2(t_j)\vert\rightarrow0\quad \text{ as } M(t_j)=\max_{\mS^n}u(t_j)\rightarrow\infty.
\end{equation}
Let $\Om^*$ be the polar set of $\Om$, and $\rho^*(t)=\rho_{\Om_{t}^*}$. Recall $-p<q^*$. Thus by Lemma \ref{l3.1}, (\ref{3.21}) and (\ref{3.22}),
\begin{equation}\label{3.24}
	\begin{split}
		J(0)\le&\int_{S_1(t)\cup S_2(t)\cup S_3(t)}\int_{u}^\infty\varphi^{-\frac{n}{\beta}}(x,s)dsdx\\
	\le&C_7\int_{S_1(t)}(\rho^*)^{-p}dx+C_8\vert S_2\vert+\int_{S_3}\int_{C_6}^\infty\varphi^{-\frac{n}{\beta}}(x,s)dsdx\\
\le&C_7\(\int_{S_1(t)}(\rho^*)^{q'}dx\)^{-\frac{p}{q'}}|S_1|^{1+\frac{p}{q'}}+C_8\vert S_2\vert+\int_{\mS^n}\int_{C_6}^\infty\varphi^{-\frac{n}{\beta}}(x,s)dsdx,
	\end{split}
\end{equation}
for any $-p<q'<q^*$. Since by Lemma \ref{3.1}, we have 
\begin{equation}\label{3.25}
V(0)=\int_{\Om_t} G(r\xi)^\frac{n}{\beta}(y)dy\le \hat{C}_5\int_{\mS^n}\rho^q(\xi)d\xi.
\end{equation}
By Lemma \ref{l2.2} and (\ref{3.25}), we have
$$\(\int_{S_1(t)}(\rho^*)^{q'}dx\)^{-\frac{p}{q'}}\le C_{9}.$$
If $M(t_j)\rightarrow\infty$, then by (\ref{3.21}), (\ref{3.23}) and (\ref{3.24}),
\begin{equation*}
J(0)\le \frac{J(0)}{2}.
\end{equation*}
Thus we arrive a contradiction, i.e. we derive the upper bound of $u$. By Lemma \ref{l3.5} we get the lower bound of $u$. 



Case (\rmnum{6}): We use the method in the proof of Lemma \ref{l3.3}. Let $s=\frac{1}{r}$,
$$V(0)=V(t)=\int_{\mS^n}\int_{0}^\rho G^{\frac{n}{\beta}}(\xi,r)r^ndrd\xi =\int_{\mS^n}\int_{u^*}^\infty\widetilde\varphi^{-\frac{n}{\beta}}(\xi,s)dsd\xi,$$
where $\widetilde\varphi^{-\frac{n}{\beta}}(\xi,s)=G^{\frac{n}{\beta}}(\xi,\frac{1}{s})s^{-n-2}$. We also have
$${\lim\sup}_{\eps\rightarrow0^+}\frac{\int_{\eps}^\infty\widetilde\varphi^{-\frac{n}{\beta}}(x,s)ds}{\eps^p}={\lim\sup}_{\eps\rightarrow0^+}\frac{\int_{0}^\frac{1}{\eps}G(r\xi)^\frac{n}{\beta}r^ndr}{\eps^p}<\infty,$$
$$J(0)\le\int_{\mS^n}\int_{u(x)}^\infty\varphi^{-\frac{n}{\beta}}(x,s)dsdx\le \hat{C}_6\int_{\mS^n}u^{-q}(x)dx=\hat{C}_6\int_{\mS^n}(\rho^*)^{q}d\xi^*.$$
Repeat the proof of case (\rmnum{5}) for the dual body $\Om_{t}^*$, then we derive the bound of $u^*$. In other words, we derive the bound of $u$.

In summary, we have completed this proof.
\end{proof}
Next we show the $C^0$ estimates of Theorem \ref{t1.1}.
\begin{lemma}\label{l3.7}
	Under flow (\ref{1.1}) and the corresponding assumptions of Theorem \ref{t1.1}, there exists a positive constant $C_{10}$ independent of $t$, such that
	$$\frac{1}{C_{10}}\le u(x,t)\le  C_{10}.$$
	It also means that
	$$\frac{1}{C_{10}}\le \rho\le C_{10},\quad |Du|\le C_{10}$$
	by Lemma \ref{l2.1}.
\end{lemma}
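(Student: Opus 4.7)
The goal is to adapt the structure of Lemmas~\ref{l3.2}--\ref{l3.6} to the non-symmetric setting of Theorem~\ref{t1.1}. Without evenness, the two-sided estimate $u\ge M|\la x,\theta\ra|$ used in those lemmas is unavailable, and Lemma~\ref{l2.1} only yields the one-sided hemispherical bounds $u(x)\ge\la x,x_{\max}\ra M$ and $\rho(\xi)\le m/\la\xi,\xi_{\min}\ra$ (with $M=\max u$, $m=\min\rho$). This is precisely why condition~(\rmnum{1}) of Theorem~\ref{t1.1} is stated on spherical caps $\{\la x,\theta\ra\ge\eps\}$ rather than on all of $\mS^n$. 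The plan is to introduce two monotone/conserved functionals in the spirit of Lemma~\ref{l3.1}---one built from $\varphi^{-\frac{n}{\beta}}$ for the upper bound, one from $G^{\frac{n}{\beta}}$ for the lower bound---and, in each case, to turn a hypothetical blow-up of $\max u$ or collapse of $\min\rho$ into divergence of the functional by matching the extremal direction to the arbitrary $\theta$ in~(\rmnum{1}) via a compactness argument.

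For the upper bound I would take $J(t):=\int_{\mS^n}\int_1^{u(x,t)}\varphi^{-\frac{n}{\beta}}(x,s)\,ds\,dx$. Since $u_0$ is smooth, $J(0)<\infty$; repeating the H\"older-type computation of Lemma~\ref{l3.1} with sign $+$ shows $J$ is non-increasing, so $J(t)\le J(0)$. Suppose $M(t_k):=\max u(\cdot,t_k)\to\infty$ along a subsequence, and extract a further subsequence with $x_{\max}(t_k)\to\theta^*\in\mS^n$. For $k$ large with $|x_{\max}(t_k)-\theta^*|<\eps/2$, one has $\{\la x,\theta^*\ra\ge\eps\}\subset\{\la x,x_{\max}(t_k)\ra\ge\eps/2\}$, and on the latter Lemma~\ref{l2.1}~(\ref{2.8}) yields $u(x,t_k)\ge(\eps/2)M(t_k)$. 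Thus $J(t_k)\ge\int_{\la x,\theta^*\ra\ge\eps}\int_1^{(\eps/2)M(t_k)}\varphi^{-\frac{n}{\beta}}(x,s)\,ds\,dx$, and monotone convergence together with the $\varphi$-half of condition~(\rmnum{1}) applied at $\theta_1=\theta^*$ sends the right side to $+\infty$, contradicting $J(t_k)\le J(0)$. This delivers a uniform upper bound $u\le R$, and hence the same for $\rho$ and $|Du|$ by Lemma~\ref{l2.1}.

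For the lower bound I would then set $V(t):=\int_{\mS^n}\int_{\rho(\xi,t)}^{R}G(r\xi)^{\frac{n}{\beta}}r^n\,dr\,d\xi=\int_{B_R\setminus\Omega_t}G^{\frac{n}{\beta}}(y)\,dy$, using the $R$ from Step~1 so that $\Omega_t\subset B_R$. On each time slice $\rho(\cdot,t)>0$ is smooth on the compact $\mS^n$, and $G$ is smooth on $\mR^{n+1}\setminus\{0\}$, so $V(t)<\infty$; the Lemma~\ref{l3.1} calculation with sign~$-$ gives $\frac{d}{dt}V(t)=0$, hence $V(t)\equiv V(0)$. If $m(t_k):=\min\rho(\cdot,t_k)\to 0$ along a subsequence with $\xi_{\min}(t_k)\to\xi^*$, then for $k$ large Lemma~\ref{l2.1}~(\ref{2.9}) gives $\rho(\xi,t_k)\le 2m(t_k)/\delta$ on $\{\la\xi,\xi_{\min}(t_k)\ra\ge\delta/2\}\supset\{\la\xi,\xi^*\ra\ge\delta\}$. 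Hence $V(t_k)\ge\int_{\la\xi,\xi^*\ra\ge\delta}\int_{2m(t_k)/\delta}^{R}G(r\xi)^{\frac{n}{\beta}}r^n\,dr\,d\xi$, and monotone convergence combined with the $G$-half of condition~(\rmnum{1}) at $\theta_2=\xi^*$ drives $V(t_k)\to+\infty$, contradicting $V(t)=V(0)$.

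The main obstacle is precisely the loss of symmetry: the extremal directions $x_{\max}(t_k),\xi_{\min}(t_k)$ move with $t$, while condition~(\rmnum{1}) is only stated at fixed thresholds $\eps,\delta$. The compactness-plus-inclusion move $\{\la x,\theta^*\ra\ge\eps\}\subset\{\la x,x_{\max}(t_k)\ra\ge\eps/2\}$ (and its radial analogue for $\xi_{\min}$) is what transfers the divergence in~(\rmnum{1}) from the fixed limit direction $\theta^*$ to the moving extremal direction; this is exactly what makes the one-sided Lemma~\ref{l2.1} estimates sufficient here, whereas the even case proofs relied on the full two-sided bound. Condition~(\rmnum{2}) is not needed for the $C^0$ bounds themselves---both $J(0)$ and $V(0)$ are automatically finite because $u_0$ is smooth and $\rho_0>0$---but it will enter the subsequent higher-order estimates and the passage to the limiting stationary equation.
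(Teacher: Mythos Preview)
Your overall strategy---build a $\varphi$-functional $J$ to trap $\max u$ and a $G$-functional $V$ to trap $\min\rho$, then feed the one-sided estimates $u\ge M\la x,x_{\max}\ra$ and $\rho\le m/\la\xi,\xi_{\min}\ra$ into the cap-divergence in condition~(\rmnum{1})---is exactly the paper's approach. The compactness extraction $x_{\max}(t_k)\to\theta^*$ you add is a clean way to match the moving extremal direction to the fixed $\theta$ in condition~(\rmnum{1}).

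There is, however, a genuine gap in the upper-bound step, and it is tied to your final claim that condition~(\rmnum{2}) is not needed for the $C^0$ bounds. With $J(t)=\int_{\mS^n}\int_1^{u}\varphi^{-n/\beta}\,ds\,dx$, the inner integral is \emph{negative} wherever $u(x,t)<1$. Hence discarding the complement of the cap does not give a lower bound:
\[
J(t_k)=\int_{\{\la x,\theta^*\ra\ge\eps\}}\int_1^{u}\varphi^{-n/\beta}\,ds\,dx+\int_{\{\la x,\theta^*\ra<\eps\}}\int_1^{u}\varphi^{-n/\beta}\,ds\,dx,
\]
and the second term is only bounded below by $-\int_{\mS^n}\int_0^{1}\varphi^{-n/\beta}\,ds\,dx$, which is $-\infty$ unless (\rmnum{2}a) holds. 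So the step ``Thus $J(t_k)\ge\int_{\la x,\theta^*\ra\ge\eps}\int_1^{(\eps/2)M(t_k)}\varphi^{-n/\beta}$'' is unjustified in general. A symmetric issue affects doing the $V$-step first: your $V$ needs the ceiling $R$ from Step~1, and without (\rmnum{2}b) there is no way to anchor $V$ at $+\infty$ instead.

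This is precisely why the paper splits into two cases. Under (\rmnum{2}a) it takes $J(t)=\int_{\mS^n}\int_0^{u}\varphi^{-n/\beta}\,ds\,dx$, which is manifestly non-negative, so dropping the complement of the cap is legitimate and the upper bound follows first; then $V(t)=\int_{B_R\setminus\Omega_t}G^{n/\beta}$ gives the lower bound. Under (\rmnum{2}b) it reverses the order: $V(t)=\int_{\mR^{n+1}\setminus\Omega_t}G^{n/\beta}$ is finite by (\rmnum{2}b) and non-negative, yielding the lower bound $u\ge 1/C_{10}$ first; then $J(t)=\int_{\mS^n}\int_{1/C_{10}}^{u}\varphi^{-n/\beta}$ is non-negative and the upper bound follows. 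Condition~(\rmnum{2}) is therefore essential already at the $C^0$ level---it is what lets one of the two functionals be anchored so that the cap-restriction inequality is valid.
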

\begin{proof}
Case (\rmnum{2}a): By $\int_{0}^1\int_{\mS^n}\varphi^{-\frac{n}{\beta}}(x,s)dxds<\infty$, we can define 
$$J(t)=\int_{\mS^n}\int_{0}^u\varphi^{-\frac{n}{\beta}}(x,s)dsdx<\infty.$$
 By the calculation of Lemma \ref{3.1}, we derive that $J(t)$ is non-increasing, i.e. $J(t)\le J(0)$. Due to convexity and Lemma \ref{l2.1}, 
$$u\ge Mx_{n+1}\quad \text{ on } \{x_{n+1}>0\}\cap\mS^n.$$
 Then
\begin{equation}\label{3.29}
\begin{split}
J(0)\ge& J(t)\\
\ge&\int_{\{x_{n+1}>\eps\}\cap\mS^n}\int_{0}^u\varphi^{-\frac{n}{\beta}}(x,s)dsdx\\
\ge&\int_{\{x_{n+1}>\eps\}\cap\mS^n}\int_{0}^{M\eps}\varphi^{-\frac{n}{\beta}}(x,s)dsdx.
\end{split}
\end{equation}
Due to $\int_{1}^\infty\int_{\{\la x,\theta_1\ra\ge\eps\}\cap\mS^n}\varphi^{-\frac{n}{\beta}}(x,s)dx ds=+\infty$, we see that $u\le C_{10}$ is independent with $t$. Then we can define
$$V(t)=\int_{B_{C_{10}}\backslash \Om_t}G(y)^{\frac{n}{\beta}}dy=\int_{\mS^n}\int_\rho^{C_{10}}G(r\xi)^{\frac{n}{\beta}}r^ndrd\xi.$$
Due to convexity and Lemma \ref{l2.1}, 
$$\rho\le \frac{m}{\la\xi,e_1\ra}\quad \text{ on } \{\la\xi,e_1\ra>0\}\cap\mS^n.$$
Thus by Lemma \ref{l3.1}
\begin{equation}\label{3.30}
	\begin{split}
	V(0)\ge& V(t)\\
		\ge&\int_{\{\la\xi,e_1\ra>\delta\}\cap\mS^n}\int_\rho^{C_{10}}G(r\xi)^{\frac{n}{\beta}}r^ndrd\xi\\
		\ge&\int_{\{\la\xi,e_1\ra>\delta\}\cap\mS^n}\int_{\frac{m}{\delta}}^{C_{10}}G(r\xi)^{\frac{n}{\beta}}r^ndrd\xi\\
	=&\int_{\frac{m}{\delta}}^{C_{10}}\int_{\{\la\xi,e_1\ra>\delta\}\cap\mS^n}G(r\xi)^{\frac{n}{\beta}}r^nd\xi dr.
	\end{split}
\end{equation}
Due to $\int_{0}^1\int_{\{\la\xi,\theta_2\ra\ge\delta\}\cap\mS^n}G(r\xi)^\frac{n}{\beta}r^nd\xi dr=+\infty$, we derive the lower bound of $u$.

Case (\rmnum{2}b): At this case we can define
$$V(t)=\int_{\mR^{n+1}\backslash \Om_t}G(y)^{\frac{n}{\beta}}dy=\int_{\mS^n}\int_\rho^{\infty}G(r\xi)^{\frac{n}{\beta}}r^ndrd\xi.$$
Similar to (\ref{3.30}) we derive the lower bound of $u$, i.e. $u\ge\frac{1}{C_{10}}$. we can define 
$$J(t)=\int_{\mS^n}\int_{\frac{1}{C_{10}}}^u\varphi^{-\frac{n}{\beta}}(x,s)dsdx.$$
Then similar to (\ref{3.29}) we can derive the upper bound of $u$.
\end{proof}
Finally we show the $C^0$ estimates of Theorem \ref{xt1.3} and close this section.
\begin{lemma}\label{l3.8}
	Under flow (\ref{x1.7}) and the corresponding assumptions of Theorem \ref{xt1.3}, there exists a positive constant $C_{11}$ independent of $t$, such that
	$$\frac{1}{C_{11}}\le u(x,t)\le  C_{11}.$$
	It also means that
	$$\frac{1}{C_{11}}\le \rho\le C_{11},\quad |Du|\le C_{11}$$
	by Lemma \ref{l2.1}.
\end{lemma}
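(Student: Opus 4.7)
The plan is to establish the pointwise bounds $r_1\le u(\cdot,t)\le r_2$ by a parabolic maximum principle, exploiting (\ref{1.8}) as outer and inner spherical barriers and the absence of a global term ($\eta\equiv 1$) in flow (\ref{x1.7}). By Lemma \ref{l2.1} the hypothesis $r_1<\rho_{M_0}<r_2$ translates into pointwise bounds $r_1<u_0(x)<r_2$ on $\mathbb{S}^n$, providing strict gaps at the initial time on which to open up a Gronwall-type comparison.

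For the upper bound, set $M(t):=\max_x u(x,t)$ and $\Phi(x,s):=\varphi(x,s)G(sx)s^{\beta}$, so that the second line of (\ref{1.8}) reads $\Phi(\cdot,r_2)\le 1$. At any spatial maximizer $x_*$ of $u(\cdot,t)$ one has $Du(x_*,t)=0$ (hence $X=Mx_*$) and $D^2u(x_*,t)\le 0$, which forces the principal radii $\lambda_i$, being the eigenvalues of $D^2u+uI$, to satisfy $\lambda_i\le M$, so $\sigma_n^{\beta/n}\le M^{\beta}$. Substituting into the PDE (\ref{2.2}) with $\eta\equiv 1$ gives
\begin{equation*}
\partial_t u(x_*,t)=\bigl(\varphi(x_*,M)G(Mx_*)\sigma_n^{\beta/n}-1\bigr)M\le\bigl(\Phi(x_*,M)-1\bigr)M.
\end{equation*}
Hamilton's trick (Danskin's theorem) then yields, almost everywhere in $t$,
\begin{equation*}
M'(t)\le\bar\Psi(M(t)),\qquad\bar\Psi(s):=\sup_{x\in\mathbb{S}^n}\bigl(\Phi(x,s)-1\bigr)s,
\end{equation*}
with $\bar\Psi$ continuous and $\bar\Psi(r_2)\le 0$ by (\ref{1.8}). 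Since $M(0)<r_2$, an ODE comparison with the constant supersolution $z\equiv r_2$ gives $M(t)\le r_2$: if $M$ first exceeded $r_2$ at some $t_0>0$, then on a right neighborhood of $t_0$ the function $w:=M-r_2\ge 0$ would satisfy $w(t_0)=0$ and $w'(t)\le\bar\Psi(r_2+w(t))\le L\,w(t)$ with $L$ a local Lipschitz constant for $\bar\Psi$ near $r_2$, whence Gronwall forces $w\equiv 0$, contradicting $w>0$.

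The lower bound is symmetric: at a spatial minimizer of $u$, $D^2u\ge 0$ gives $\lambda_i\ge u$ and $\sigma_n^{\beta/n}\ge m^{\beta}$ with $m(t):=\min_x u(x,t)$; Hamilton's trick yields $m'(t)\ge\underline\Psi(m(t))$ with $\underline\Psi(s):=\inf_x(\Phi(x,s)-1)s$, and the first line of (\ref{1.8}) reads $\underline\Psi(r_1)\ge 0$, so the same Gronwall argument against the constant subsolution $r_1$ preserves $m(t)\ge r_1$. Combining, $r_1\le u(\cdot,t)\le r_2$; Lemma \ref{l2.1} transfers these bounds to $\rho$, and the identity $\rho^2=u^2+|Du|^2$ yields $|Du|\le\rho\le r_2$, so $C_{11}:=\max(r_2,r_1^{-1})$ works.

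The main subtlety is the Gronwall closure: because no monotonicity of $\Phi$ in $s$ is assumed, one cannot directly compare the full function $u$ pointwise with the constant barrier $r_2$, but the one-sided ODE for $M(t)$ together with local Lipschitz continuity of $\bar\Psi$ (inherited from the smoothness of $\varphi$ and $G$) suffices to keep $M$ below $r_2$ without any extra monotonicity hypothesis on the speed.
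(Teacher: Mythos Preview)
Your argument is correct and follows essentially the same route as the paper: derive the differential inequality for $\max_x u$ (resp.\ $\min_x u$) at a spatial extremum, use $Du=0$ there to reduce $G(X)$ to $G(ux)$, and then invoke the barrier condition (\ref{1.8}) at the levels $r_2$ and $r_1$. The only difference is presentational: the paper simply says ``consider the first time the maximum touches $r_2$, then $\partial_t u\le 0$'' and concludes $u_{\max}\le\max\{r_2,u_{\max}(0)\}$, whereas you spell out the ODE comparison via a Lipschitz/Gronwall closure for $\bar\Psi$, which is a legitimate way to make that one-line maximum-principle step rigorous in the absence of monotonicity of $\Phi$ in $s$.
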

\begin{proof}
This proof is inspired by \cite{DL4} Lemma 4.1. Let $u_{\max}(t)=\max_{x\in \mS^n}u(\cdot,t)=u(x_t,t)$. For fixed time $t$, at the point $x_t$, we have $$D_iu=0 \text{ and } D^2_{ij}u\leq0.$$
Note that $h_{ij}=D^2_{ij}u+u\delta_{ij}$.
At the point $x_t$, we have $\sigma_n^\frac{\beta}{n}(h_{ij})\leq u^{\beta}$. Then
$$\frac{d}{dt}u_{\max}\leq u(\varphi G u^\beta-1).$$
Note that $\rho(x_t)=\max_{\mS^n}\rho=\max_{\mS^n}u$. If we denote $X=\rho\xi$, then $u(x_t)=\la X,x\ra=\rho(x_t)\la x,\xi\ra$, i.e. $\la x,\xi\ra|_{x_t}=1$ and $X=u_{x_t}x$.
Since $r_1<\rho_{M_0}<r_2$, we can consider the point where the maximum point touches the sphere $|X|=r_2$ for the first time. By (\ref{1.8}) we have $\p_tu\le0$ at maximum points. Hence
$$u_{\max}\leq \max\{r_2,u_{\max}(0)\}.$$
Similarly, at the minimum point, 
$$\frac{d}{dt}u_{\min}\geq u(\varphi Gu^\beta-1).$$
 Hence by the maximum principle,
$$u_{\min}\geq \min\{r_1,u_{\min}(0)\}.$$
\end{proof}

\section{Convergence and proof of theorems}
\begin{proofs of Theorem 1.1-1.5}
We only need to prove Theorem \ref{t1.1}, \ref{t1.2} and \ref{xt1.3} since Theorem \ref{t1.3} and \ref{t1.4} are direct corollaries of Theorem \ref{t1.1}, \ref{t1.2} and \ref{xt1.3}.
We have  proved the $C^0$ and $C^1$ estimates in Section 3. In detail,
we have proved the $C^0$ and $C^1$ estimates of cases (\rmnum{3}) and (\rmnum{4}) of Theorem \ref{t1.2} in Lemma \ref{l3.2} and \ref{l3.3}; the $C^0$ and $C^1$ estimates of cases (\rmnum{1}) and (\rmnum{2}) of Theorem \ref{t1.2} have been proved in Lemma \ref{l3.4} and \ref{l3.5}; the $C^0$ and $C^1$ estimates of cases (\rmnum{5}) and (\rmnum{6}) of Theorem \ref{t1.2} have been proved in Lemma \ref{l3.6}; the $C^0$ and $C^1$ estimates of Theorem \ref{t1.1} and \ref{xt1.3} have been proved in Lemma \ref{l3.7} and \ref{l3.8} respectively.

Next we want to derive the bounds of $\eta(t)$. This is obvious in Theorem \ref{xt1.3}. Then we shall derive the bounds of $\eta(t)$ under the assumptions of Theorem \ref{t1.1} or \ref{t1.2}.
By $C^0$ estimates,
$$\frac{1}{C_{12}}\le\int_{\mS^n}G(\xi,\rho)^\frac{n}{\beta}\rho^{n+1}d\xi\le C_{12}.$$
By (\ref{2.6}) and $C^0$ estimates, we have $\frac{1}{C_{13}}\le\int_{\mS^n}\sigma_{n}dx\le C_{13}.$ This means
$$\int_{\mS^n} u\varphi(x,u)G^{\frac{n}{\beta}+1}(X)\sigma_{n}^{\frac{\beta}{n}+1}dx\ge C_{14}\int_{\mS^n}\sigma_{n}^{\frac{\beta}{n}+1}dx\ge C_{15}(\int_{\mS^n}\sigma_{n}dx)^{\frac{\beta}{n}+1}\ge C_{16},$$
i.e. we have the upper bound of $\eta(t)$. To derive the lower bound of $\eta(t)$,
by \cite{DL4} Lemma 3.3, we only need to show 
\begin{equation}\label{4.1}
\eta(t)\max \sigma_{n}^{\frac{\beta}{n}}\ge C
\end{equation}
to derive the upper bound of $\sigma_{n}$. In fact, by (\ref{2.6}) and the $C^0$ estimates, we have
\begin{equation*}
\eta(t)\max \sigma_{n}^{\frac{\beta}{n}}\ge C_{17}\max \sigma_{n}^{\frac{\beta}{n}}\dfrac{\int_{\mS^n}\sigma_{n}dx}{\int_{\mS^n}\sigma_{n}^{\frac{\beta}{n}+1}dx}\ge C_{17}.
\end{equation*}
By \cite{DL4} Section 3 we derive the $C^1$ and $C^2$ estimates, see also \cite{CL}. 
The $C^0$, $C^1$ and $C^2$ estimates are independent of $T^*$. By $C^0$, $C^1$ and $C^2$ estimates, we conclude that the equation (\ref{2.2}) is uniformly parabolic. By the $C^0$ estimate, the gradient estimate, the $C^2$ estimate, Cordes and Nirenberg type estimates \cite{B2,CO,LN} and the Krylov's theory \cite{KNV}, we get the H$\ddot{o}$lder continuity of $D^2u$ and $u_t$. Then we can get higher order derivation estimates by the regularity theory of the uniformly parabolic equations. Hence we obtain the long time existence and $C^\infty$-smoothness of solutions for the normalized flow (\ref{2.2}). The uniqueness of smooth solutions also follows from the parabolic theory.

Next we will prove that the support function $u_\infty$ of $M_\infty$ satisfies the following equation
$$\varphi(x,u)G(X)\sigma_{n}^\frac{\beta}{n}=c.$$
The convergence of flow (\ref{x1.7}) had been proved in proof of Theorem 1.4 in \cite{DL4}. Finally we will prove the convergence of flow (\ref{1.1}).
By the monotonicity of $J(t)$ we have
$$\vert\int_{0}^t\frac{dJ(s)}{ds}ds\vert=\vert J(t)-J(0)\vert<\infty.$$
Then $\frac{dJ(t)}{dt}=0$ if and only if $M_t$ satisfies $\varphi(x,u)G(X)\sigma_{n}^\frac{\beta}{n}=c.$ 
Hence there is a sequence of $t_i\rightarrow\infty$ such that $\frac{d}{dt}V\rightarrow0$, i.e. $u(\cdot,t_i)$ converges smoothly to a positive, smooth and uniformly convex function $u_\infty$ solving $\varphi(x,u)G(X)\sigma_{n}^\frac{\beta}{n}=c.$
\end{proofs of Theorem 1.1-1.5}

\section{Reference}
\begin{biblist}

\bib{B2}{article}{
	author={Andrews B.},
	author={ McCoy J.},
	title={Convex hypersurfaces with pinched principal curvatures and flow of convex hypersurfaces by high powers of curvature},
	journal={Trans. Amer. Math. Soc.},
	volume={364(7)}
	date={2012},
	pages={3427-3447},
}

\bib{BBC}{article}{
	author={Bianchi G.},
	author={B\"{o}r\"{o}czky K.J.},
	author={Colesanti A.},
	title={The Orlicz version of the $L_p$ Minkowski problem for
		$-n<p<0$},
	journal={Adv. in Appl. Math.},
	volume={111},
	date={2019},
	pages={101937, 29},
	issn={0196-8858},
	review={\MR{3998833}},
	doi={10.1016/j.aam.2019.101937},
}

\bib{BF}{article}{
	author={B\"{o}r\"{o}czky K. J.},
	author={Fodor F.},
	title={The $L_p$ dual Minkowski problem for $p>1$ and $q>0$},
	journal={J. Differential Equations},
	volume={266},
	date={2019},
	number={12},
	pages={7980--8033},
	issn={0022-0396},
	review={\MR{3944247}},
	doi={10.1016/j.jde.2018.12.020},
}

\bib{BLY}{article}{
	author={B\"{o}r\"{o}czky K. J.},
	author={Lutwak E.},
	author={Yang D.},
	author={Zhang G.},
	author={Zhao Y.},
	title={The dual Minkowski problem for symmetric convex bodies},
	journal={Adv. Math.},
	volume={356},
	date={2019},
	pages={106805, 30},
	issn={0001-8708},
	review={\MR{4008522}},
	doi={10.1016/j.aim.2019.106805},
}

\bib{BIS}{article}{
	author={Bryan P.},
	author={Ivaki M. N.},
	author={Scheuer J.},
	title={A unified flow approach to smooth, even $L_p$-Minkowski problems},
	journal={Anal. PDE},
	volume={12},
	date={2019},
	number={2},
	pages={259--280},
	issn={2157-5045},
	review={\MR{3861892}},
	doi={10.2140/apde.2019.12.259},
}

\bib{BIS2}{article}{
	author={Bryan P.},
	author={Ivaki M. N.},
	author={Scheuer J.},
	title={Parabolic approaches to curvature equations},
	journal={Nonlinear Anal.},
	volume={203},
	date={2021},
	pages={Paper No. 112174, 24},
	issn={0362-546X},
	review={\MR{4172901}},
	doi={10.1016/j.na.2020.112174},
}

\bib{BIS3}{article}{
	author={Bryan P.},
author={Ivaki M. N.},
author={Scheuer J.},
	title={Orlicz-Minkowski flows},
	journal={Calc. Var. Partial Differential Equations},
	volume={60},
	date={2021},
	number={1},
	pages={Paper No. 41, 25},
	issn={0944-2669},
	review={\MR{4204567}},
	doi={10.1007/s00526-020-01886-3},
}

\bib{CO}{article}{
	author={Cordes H. O.},
	title={$\ddot U$ber die erste Randwertaufgabe bei quasilinearen Differentialgleichungen zweiter Ordnung in mehr als zwei Variablen},
	journal={Math. Ann.},
	volume={131}
	date={1956},
	pages={278-312},
}

\bib{CHZ}{article}{
	author={Chen C.},
	author={Huang Y.},
	author={Zhao Y.},
	title={Smooth solutions to the $L_p$ dual Minkowski problem},
	journal={Math. Ann.},
	volume={373},
	date={2019},
	number={3-4},
	pages={953--976},
	issn={0025-5831},
	review={\MR{3953117}},
	doi={10.1007/s00208-018-1727-3},
}

\bib{CHD}{article}{
	author={Chen H.},
	title={On a generalised Blaschke–Santalò inequality},
	date={2018},
	number={preprint},
	journal={arXiv:1808.02218},
}

\bib{CCL}{article}{
	author={Chen H.},
	author={Chen S.},
	author={Li Q.},
	title={Variations of a class of Monge-Amp\`ere-type functionals and their
		applications},
	journal={Anal. PDE},
	volume={14},
	date={2021},
	number={3},
	pages={689--716},
	issn={2157-5045},
	review={\MR{4259871}},
	doi={10.2140/apde.2021.14.689},
}

\bib{CL}{article}{
	author={Chen H.},
	author={Li Q.},
	title={The $L_ p$ dual Minkowski problem and related parabolic flows},
	journal={J. Funct. Anal.},
	volume={281},
	date={2021},
	number={8},
	pages={Paper No. 109139, 65},
	issn={0022-1236},
	review={\MR{4271790}},
	doi={10.1016/j.jfa.2021.109139},
}

\bib{CLL}{article}{
	author={Chen L.},
	author={Liu Y.},
	author={Lu J.},
	author={Xiang N.},
	title={Existence of smooth even solutions to the dual Orlicz-Minkowski
		problem},
	journal={J. Geom. Anal.},
	volume={32},
	date={2022},
	number={2},
	pages={Paper No. 40, 25},
	issn={1050-6926},
	review={\MR{4358692}},
	doi={10.1007/s12220-021-00803-0},
}

\bib{CW}{article}{
	author={Chou K.},
	author={Wang X.},
	title={The $L_p$-Minkowski problem and the Minkowski problem in
		centroaffine geometry},
	journal={Adv. Math.},
	volume={205},
	date={2006},
	number={1},
	pages={33--83},
	issn={0001-8708},
	review={\MR{2254308}},
	doi={10.1016/j.aim.2005.07.004},
}

\bib{DL}{article}{
	author={Ding S.},
	author={Li G.},
	title={A class of curvature flows expanded by support function and
		curvature function},
	journal={Proc. Amer. Math. Soc.},
	volume={148},
	date={2020},
	number={12},
	pages={5331--5341},
	issn={0002-9939},
	review={\MR{4163845}},
	doi={10.1090/proc/15189},
}

\bib{DL3}{article}{
	author={Ding S.},
	author={Li G.},
	title={A class of inverse curvature flows and $L_p$ dual Christoffel-Minkowski problem},
	pages={	to appear in Transactions of the American Mathematical Society},
}

\bib{DL4}{article}{
	author={Ding S.},
	author={Li G.},
	title={Anisotropic flows without global terms and dual Orlicz Christoffel-Minkowski type problem},
	pages={	arXiv:2207.03114},
}

\bib{FH}{article}{
	author={Feng Y.},
	author={He B.},
	title={The Orlicz Aleksandrov problem for Orlicz integral curvature},
	journal={Int. Math. Res. Not. IMRN},
	date={2021},
	number={7},
	pages={5492--5519},
	issn={1073-7928},
	review={\MR{4241134}},
	doi={10.1093/imrn/rnz384},
}

\bib{FHL}{article}{
	author={Feng Y.},
	author={Hu S.},
	author={Liu W.},
	title={Existence and uniqueness of solutions to the Orlicz Aleksandrov
		problem},
	journal={Calc. Var. Partial Differential Equations},
	volume={61},
	date={2022},
	number={4},
	pages={Paper No. 148, 23},
	issn={0944-2669},
	review={\MR{4434164}},
	doi={10.1007/s00526-022-02260-1},
}

\bib{FWJ2}{article}{
	author={Firey W. J.},
	title={The determination of convex bodies from their mean radius of
		curvature functions},
	journal={Mathematika},
	volume={14},
	date={1967},
	pages={1--13},
	issn={0025-5793},
	review={\MR{217699}},
	doi={10.1112/S0025579300007956},
}

\bib{GHW1}{article}{
	author={Gardner J.},
	author={Hug D.},
	author={Weil W.},
	author={Xing S.},
	author={Ye D.},
	title={General volumes in the Orlicz-Brunn-Minkowski theory and a related
		Minkowski problem I},
	journal={Calc. Var. Partial Differential Equations},
	volume={58},
	date={2019},
	number={1},
	pages={Paper No. 12, 35},
	issn={0944-2669},
	review={\MR{3882970}},
	doi={10.1007/s00526-018-1449-0},
}

\bib{GHW2}{article}{
	author={Gardner J.},
author={Hug D.},
author={Xing S.},
author={Ye D.},
	title={General volumes in the Orlicz-Brunn-Minkowski theory and a related
		Minkowski problem II},
	journal={Calc. Var. Partial Differential Equations},
	volume={59},
	date={2020},
	number={1},
	pages={Paper No. 15, 33},
	issn={0944-2669},
	review={\MR{4040624}},
	doi={10.1007/s00526-019-1657-2},
}

\bib{GM}{article}{
	author={Guan P.},
	author={Ma X.},
	title={The Christoffel-Minkowski problem. I. Convexity of solutions of a
		Hessian equation},
	journal={Invent. Math.},
	volume={151},
	date={2003},
	number={3},
	pages={553--577},
	issn={0020-9910},
	review={\MR{1961338}},
	doi={10.1007/s00222-002-0259-2},
}

\bib{HLYZ}{article}{
	author={Haberl C.},
	author={Lutwak E.},
	author={Yang D.},
	author={Zhang G.},
	title={The even Orlicz Minkowski problem},
	journal={Adv. Math.},
	volume={224},
	date={2010},
	number={6},
	pages={2485--2510},
	issn={0001-8708},
	review={\MR{2652213}},
	doi={10.1016/j.aim.2010.02.006},
}

\bib{HLY2}{article}{
	author={Huang Y.},
	author={Lutwak E.},
	author={Yang D.},
	author={Zhang G.},
	title={Geometric measures in the dual Brunn-Minkowski theory and their
		associated Minkowski problems},
	journal={Acta Math.},
	volume={216},
	date={2016},
	number={2},
	pages={325--388},
	issn={0001-5962},
	review={\MR{3573332}},
	doi={10.1007/s11511-016-0140-6},
}

\bib{HLY}{article}{
	author={Huang Y.},
author={Lutwak E.},
author={Yang D.},
author={Zhang G.},
	title={The $L_p$-Aleksandrov problem for $L_p$-integral curvature},
	journal={J. Differential Geom.},
	volume={110},
	date={2018},
	number={1},
	pages={1--29},
	issn={0022-040X},
	review={\MR{3851743}},
	doi={10.4310/jdg/1536285625},
}

\bib{HZ}{article}{
	author={Huang Y.},
	author={Zhao Y.},
	title={On the $L_p$ dual Minkowski problem},
	journal={Adv. Math.},
	volume={332},
	date={2018},
	pages={57--84},
	issn={0001-8708},
	review={\MR{3810248}},
	doi={10.1016/j.aim.2018.05.002},
}

\bib{IM}{article}{
   author={Ivaki M.},
   title={Deforming a hypersurface by principal radii of curvature and support function},
   journal={Calc. Var. PDEs},
   volume={58(1)}
   date={2019},
}

\bib{JLL}{article}{
	author={Ju H.},
	author={Li B.},
	author={Liu Y.},
	title={Deforming a convex hypersurface by anisotropic curvature flows},
	journal={Adv. Nonlinear Stud.},
	volume={21},
	date={2021},
	number={1},
	pages={155--166},
	issn={1536-1365},
	review={\MR{4234083}},
	doi={10.1515/ans-2020-2108},
}

\bib{KNV}{book}{
  author={Krylov N. V.},
     title= {Nonlinear elliptic and parabolic quations of the second order},
 publisher={D. Reidel Publishing Co., Dordrecht},
     date={1987. xiv+462pp},

}

\bib{LN}{book}{
  author={Nirenberg L.},
     title= {On a generalization of quasi-conformal mappings and its application to elliptic partial differential equations},
 publisher={Contributions to the theory of partial differential equations, Annals of Mathematics Studies},
     date={ Princeton University Press, Princeton, N. J.,1954, pp. 95C100.}
  }

\bib{LSW}{article}{
   author={Li Q.},
   author={Sheng W.},
   author={Wang X-J},
   title={Flow by Gauss curvature to the Aleksandrov and dual Minkowski problems},
   journal={Journal of the European Mathematical Society},
   volume={22}
   date={2019},
   pages={893-923},
}

\bib{LL}{article}{
	author={Liu Y.},
	author={Lu J.},
	title={A flow method for the dual Orlicz-Minkowski problem},
	journal={Trans. Amer. Math. Soc.},
	volume={373},
	date={2020},
	number={8},
	pages={5833--5853},
	issn={0002-9947},
	review={\MR{4127893}},
	doi={10.1090/tran/8130},
}

\bib{LE}{article}{
	author={Lutwak E.},
	title={The Brunn-Minkowski-Firey theory. I. Mixed volumes and the
		Minkowski problem},
	journal={J. Differential Geom.},
	volume={38},
	date={1993},
	number={1},
	pages={131--150},
	issn={0022-040X},
	review={\MR{1231704}},
}

\bib{LO}{article}{
	author={Lutwak E.},
	author={Oliker V.},
	title={On the regularity of solutions to a generalization of the
		Minkowski problem},
	journal={J. Differential Geom.},
	volume={41},
	date={1995},
	number={1},
	pages={227--246},
	issn={0022-040X},
	review={\MR{1316557}},
}

\bib{LYZ2}{article}{
	author={Lutwak E.},
	author={Yang D.},
	author={Zhang G.},
	title={$L_p$ dual curvature measures},
	journal={Adv. Math.},
	volume={329},
	date={2018},
	pages={85--132},
	issn={0001-8708},
	review={\MR{3783409}},
	doi={10.1016/j.aim.2018.02.011},
}

\bib{SR}{book}{
	author={Schneider R.},
	title={Convex bodies: the Brunn-Minkowski theory},
	series={Encyclopedia of Mathematics and its Applications},
	volume={151},
	edition={Second expanded edition},
	publisher={Cambridge University Press, Cambridge},
	date={2014},
	pages={xxii+736},
	isbn={978-1-107-60101-7},
	review={\MR{3155183}},
}

\bib{UJ}{article}{
   author={Urbas J.},
   title={An expansion of convex hypersurfaces},
   journal={J. Diff. Geom.},
   volume={33(1)}
   date={1991},
   pages={91-125},
}

\bib{Z}{article}{
	author={Zhao Y.},
	title={The dual Minkowski problem for negative indices},
	journal={Calc. Var. Partial Differential Equations},
	volume={56},
	date={2017},
	number={2},
	pages={Paper No. 18, 16},
	issn={0944-2669},
	review={\MR{3605843}},
	doi={10.1007/s00526-017-1124-x},
}

\end{biblist}

\end{document}